\newtheorem{theorem}{Theorem}[section]
\newtheorem{definition}[theorem]{Definition}
\newtheorem{proposition}[theorem]{Proposition}
\newtheorem{problem}[theorem]{Problem}
\newtheorem{question}[theorem]{Question}
\newtheorem{conjecture}[theorem]{Conjecture}
\begin{document}

\title{Analytic aspects of the circulant Hadamard conjecture}

\author{Teodor Banica}
\address{TB: Department of Mathematics, Cergy-Pontoise University, 95000 Cergy-Pontoise, France. {\tt teo.banica@gmail.com}}

\author{Ion Nechita}
\address{IN: CNRS, Laboratoire de Physique Th\'eorique, IRSAMC, Universit\'e de Toulouse, UPS, 31062 Toulouse, France. {\tt nechita@irsamc.ups-tlse.fr}}

\author{Jean-Marc Schlenker}
\address{JMS: University of Luxembourg, Campus Kirchberg, Mathematics Research Unit, 6 rue Richard Coudenhove-Kalergi, L-1359 Luxembourg. {\tt jean-marc.schlenker@uni.lu}}

\subjclass[2000]{05B20}
\keywords{Circulant Hadamard matrix}

\begin{abstract}
We investigate the problem of counting the real or complex Hadamard matrices which are circulant, by using analytic methods. Our main observation is the fact that for $|q_0|=\ldots=|q_{N-1}|=1$ the quantity $\Phi=\sum_{i+k=j+l}\frac{q_iq_k}{q_jq_l}$ satisfies $\Phi\geq N^2$, with equality if and only if $q=(q_i)$ is the eigenvalue vector of a rescaled circulant complex Hadamard matrix. This suggests three analytic problems, namely: (1) the brute-force minimization of $\Phi$, (2) the study of the critical points of $\Phi$, and (3) the computation of the moments of $\Phi$. We explore here these questions, with some results and conjectures.
\end{abstract}

\maketitle

\tableofcontents

\section*{Introduction}

An Hadamard matrix is a square matrix $H\in M_N(\pm 1)$, whose rows are pairwise orthogonal. The size of such a matrix must be $N=2$ or $N\in 4\mathbb N$, and the celebrated Hadamard Conjecture (HC) states that for any $N\in 4\mathbb N$, there exists an Hadamard matrix of order $N$. Also famous is the Circulant Hadamard Conjecture (CHC):

\medskip

\noindent {\bf Conjecture (CHC).} {\em There are exactly $8$ circulant Hadamard matrices, namely
$$K_4=\begin{pmatrix}-1&1&1&1\\ 1&-1&1&1\\ 1&1&-1&1\\ 1&1&1&-1\end{pmatrix}$$
and its $7$ conjugates, obtained from $\pm K_4$ by cyclic permutations of the rows.}

\medskip

We refer to the monographs \cite{aga}, \cite{hor}, \cite{rys} for a discussion of these conjectures. In what follows we will be mainly interested in the CHC, and its generalizations.

There has been a lot of work on the CHC, starting with Turyn's paper \cite{tur}. See \cite{jlo}, \cite{lsc}, \cite{sch}. In the case of the complex Hadamard matrices, the circulant problematics is interesting as well, due to Bj\"orck's reformulation in terms of cyclic $N$-roots \cite{bjo}. For a number of results and applications here, see \cite{alm}, \cite{bfr}, \cite{bha}, \cite{ckh}, \cite{goy}, \cite{ha2}, \cite{hjo}, \cite{sz1}, \cite{sz2}.

The aim of this paper is present a new point of view on the CHC, and on the related circulant complex Hadamard matrix problematics, by using analytic methods.

Let us fix $N\in\mathbb N$, and denote by $F=(w^{ij})/\sqrt{N}$ with $w=e^{2\pi {\rm i}/N}$ the Fourier matrix. It is well-known that $U\in M_N(\mathbb C)$ is circulant if and only if $U=FQF^*$, with $Q$ diagonal. In addition, for $U\in U_N$ the eigenvalues $q_i=Q_{ii}$ must be of modulus $1$.

On the other hand, the unitaries $U\in U_N$ which are multiples of complex Hadamard matrices can be detected by using the Cauchy-Schwarz inequality. For instance, we have $||U||_4\geq 1$, with equality if and only if $H=\sqrt{N}U$ is Hadamard. Now by restricting attention to the circulant matrices, and applying the Fourier transform, we obtain:

\medskip

\noindent {\bf Fact.} {\em For any complex numbers satisfying $|q_0|=\ldots=|q_{N-1}|=1$ we have
$$\sum_{i+k=j+l}\frac{q_iq_k}{q_jq_l}\geq N^2$$
with equality iff $H=\sqrt{N}FQF^*$, with $Q=diag(q_i)$, is a complex Hadamard matrix.}

\medskip

Here, and in what follows, all the indices are taken modulo $N$.

This observation suggests that the circulant complex Hadamard matrices can be studied as well by using analytic methods, and more precisely by further exploring the inequality appearing above. We will present in this paper some results in this direction.

Now, let us go back to the real case. So, let $U\in U_N$ be circulant, and write $U=FQF^*$ as above. It is easy to see that $U$ is a real matrix if and only if the eigenvalues $q_0,\ldots,q_{N-1}$ satisfy $\bar{q}_i=q_{-i}$ for any $i$, so the CHC has the following reformulation:

\medskip

\noindent {\bf Conjecture (CHC, analytic formulation).} {\em For any $N\neq 4$ the quantity
$$\Phi=\sum_{i+j+k+l=0}q_iq_jq_kq_l$$
satisfies $\Phi>N^2$, for any $q_0,\dots,q_{N-1}$ satisfying $|q_i|=1$ and $\bar{q}_i=q_{-i}$.}

\medskip

We will explore here various analytic methods that can be used for minimizing $\Phi$. These methods fall into three classes:
\begin{enumerate}
\item Direct approach. We will present here a few conjectural statements, found by solving the problem at $N=8,12$, using a computer. One open problem here is to decide whether $\min\Phi-N^2$ converges to $0$ or not, with $N\to\infty$.

\item Critical points. Here we can use a whole machinery from \cite{bc1}, \cite{bne}, \cite{bnz}. We will find here an algebraic criterion for the critical points, do some explicit computations for $N$ small, and state a conjecture found by using a computer.

\item The moment method. Our remark here is that the quantity $\widetilde{\Phi}$ obtained from $\Phi$ by dropping the arithmetic condition $i+j+k+l=0$ describes a certain random walk on the lattice $\mathbb Z^N\subset\mathbb R^N$, related to the P\'olya random walk \cite{pol}.
\end{enumerate}

Summarizing, regardless of the chosen approach, the concrete questions that we are led into look rather difficult, but definitely further approachable.

We should mention that this kind of approach, in the general context of the Hadamard matrix problematics, goes back to \cite{bc1}. Proposed there was an analytic approach to the HC, by using integrals over $O_N$. However, while a considerable quantity of work has gone into the study of such integrals, cf. \cite{bc2}, \cite{csn}, \cite{gor}, \cite{psw}, the state-of-art of the subject is several levels below what would be needed for any slight advance on the HC. Regarding the CHC, however, the situation here is quite different, the problem being probably easier. We intend to come back to these questions in some future work.

The paper is organized as follows: 1-2 contain basic facts on the circulant Hadamard matrices, in 3-4 we discuss in detail the norm estimate $\Phi\geq N^2$, and in 5-6 we explore the symmetries of the critical points of $\Phi$, and the computation of the moments of $\Phi$. 

\medskip

\noindent {\bf Acknowledgements.} We would like to thank Yemon Choi, Beno\^ it Collins, Rob Craigen and Karol \.Zyczkowski for several useful discussions.

\section{Circulant Hadamard matrices}

We consider in this paper various $N\times N$ matrices over the real or complex numbers. The matrix indices will range in $\{0,1,\ldots,N-1\}$, and will be taken modulo $N$.

As explained in the introduction, we are interested in the Circulant Hadamard Conjecture (CHC), stating that the only circulant Hadamard matrices can appear at $N=4$.

The first result in this direction, due to Turyn \cite{tur}, is as follows:

\begin{proposition}
The size of a circulant Hadamard matrix $H\in M_N(\pm 1)$ must be of the form $N=4n^2$, with $n\in\mathbb N$.
\end{proposition}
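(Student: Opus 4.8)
The plan is to reduce the Hadamard condition on the circulant matrix $H$ to a statement about its first row. Write the first row as $(a_0,\dots,a_{N-1})$ with $a_i\in\{\pm1\}$, so that $H_{ij}=a_{j-i}$ with indices mod $N$. Computing $(HH^T)_{ik}=\sum_j a_{j-i}a_{j-k}$ and reindexing by $m=j-i$ shows that $H$ is Hadamard if and only if the periodic autocorrelation $\gamma(t):=\sum_i a_ia_{i+t}$ satisfies $\gamma(t)=N\delta_{t,0}$ for all $t$. The whole proof then rests on squeezing arithmetic information out of this family of identities.

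The key step is to sum the relations $\gamma(t)=N\delta_{t,0}$ over all $t\in\{0,\dots,N-1\}$. Exchanging the order of summation gives $\sum_t\gamma(t)=\sum_i a_i\big(\sum_t a_{i+t}\big)=\big(\sum_i a_i\big)^2=s^2$, since for each fixed $i$ the inner sum runs over all residues and equals $s:=\sum_i a_i$. On the other hand the left-hand side is just $\gamma(0)=N$. Hence $s^2=N$, so $N$ is a perfect square, say $N=m^2$ with $m\in\mathbb N$ and $s=\pm m$.

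Finally I would extract a parity constraint: for any $t\neq0$ the relation $\gamma(t)=0$ exhibits $0$ as a sum of $N$ terms each equal to $\pm1$, which forces $N$ to be even (this is just orthogonality of two distinct rows of $H$). But $N=m^2$ even forces $m$ even, say $m=2n$, and therefore $N=4n^2$. I do not expect any serious obstacle here; the only points needing a line of care are the degenerate small cases (the argument uses $N\geq2$, and $N=2$ should be checked to admit no circulant Hadamard matrix so that the statement is not vacuous there) and the bookkeeping that every lag used is genuinely nonzero modulo $N$. The real content, and the reason the circulant hypothesis is so much more restrictive than plain Hadamardness, is the collapse in the second step of all the orthogonality relations onto the single scalar equation $s^2=N$.
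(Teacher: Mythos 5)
Your proof is correct and follows essentially the same route as the paper: your identity $\sum_t\gamma(t)=s^2$ with $s=\sum_i a_i$ is exactly the paper's computation $\sum_i\langle H_0,H_i\rangle=(a-b)^2$, equated to $\langle H_0,H_0\rangle=N$, followed by the same parity step ($N$ even and a perfect square forces $N=4n^2$). The only cosmetic difference is that you phrase the row sum as $s=a-b$ via the autocorrelation function rather than counting $\pm1$ entries directly.
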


\begin{proof}
Let $a,b\in\mathbb N$ with $a+b=N$ be the number of $1,-1$ entries in the first row of $H$. If we denote by $H_0,\ldots,H_{N-1}$ the rows of $H$, then by summing over columns we get:
$$\sum_{i=0}^{N-1}<H_0,H_i>=a(a-b)+b(b-a)=(a-b)^2$$

On the other hand, the quantity on the left is $<H_0,H_0>=N$. Thus $N$ is a square, and together with the well-known fact that $N\in 2\mathbb N$, this gives $N=4n^2$, with $n\in\mathbb N$.
\end{proof}

Also found by Turyn in \cite{tur} is the fact that the above number $n\in\mathbb N$ must be odd, and not a prime power. For further generalizations of these results, see \cite{jlo}, \cite{lsc}, \cite{sch}.

An interesting extension of the HC/CHC problematics, that we would like to explain now, appears when looking at the complex Hadamard matrices:

\begin{definition}
A complex Hadamard matrix is a square matrix $H\in M_N(\mathbb C)$ whose entries are on the unit circle, $|H_{ij}|=1$, and whose rows are pairwise orthogonal.
\end{definition}

The basic example here is the rescaled Fourier matrix, $F_N=(w^{ij})$ with $w=e^{2\pi {\rm i}/N}$:
$$F_N=\begin{pmatrix}
1&1&1&\ldots&1\\
1&w&w^2&\ldots&w^{N-1}\\
\ldots&\ldots&\ldots&\ldots&\ldots\\
1&w^{N-1}&w^{2(N-1)}&\ldots&w^{(N-1)^2}
\end{pmatrix}$$

This example prevents the existence of a complex analogue of the HC. However, when trying to construct complex Hadamard matrices with roots of unity of a given order $l\in\mathbb N$, a subtle generalization of the HC problematics appears. See \cite{ha1}, \cite{lle}, \cite{lau}, \cite{tzy}, \cite{win}.

For a number of applications of such matrices, see \cite{bb+}, \cite{jsu}, \cite{pop}, \cite{ta1}, \cite{wer}.

Regarding now the circulant case, once again there is much more ``room'' in the complex case. Here is a basic example of a circulant complex Hadamard matrix:
$$\widetilde{F}_2=\begin{pmatrix}1&{\rm i}\\ {\rm i}&1\end{pmatrix}$$

Here the notation $\widetilde{F}_2$ comes from the fact that this matrix can be obtained from the Fourier matrix $F_2$ by performing certain elementary operations:

\begin{definition}
Two complex Hadamard matrices $H,K\in M_N(\mathbb C)$ are called equivalent if one can pass from one to the other by permuting the rows and columns, or by multiplying the rows and columns by complex numbers of modulus $1$.
\end{definition}

In other words, we use the equivalence relation on the $N\times N$ matrices coming from the action $T_{(A,B)}(H)=AHB^*$ of the group $G=((\mathbb T\wr S_N)\times(\mathbb T\wr S_N))/\mathbb T$. Here, and in what follows, we denote by $\mathbb T$ the unit circle in the complex plane.

Now back to the circulant examples, at $N=3$ a similar situation happens, because the Fourier matrix $F_3$ is equivalent to the following circulant matrix ($w=e^{2\pi {\rm i}/3}$):
$$\widetilde{F}_3=\begin{pmatrix}1&1&w\\ w&1&1\\ 1&w&1\end{pmatrix}$$

At $N=4$ we have the matrix $K_4$ appearing in the statement of the CHC. Note that $K_4$ is equivalent to $F_2\otimes F_2$, which is the Fourier matrix of the Klein group $\mathbb Z_2\otimes \mathbb Z_2$. 

At $N=5$ now, or more generally at any $N$ odd, we have the following matrix:
$$\tilde{F}_N=\begin{pmatrix}1&1&w&w^3&w^6&\ldots&w^{\frac{(N-2)(N-1)}{2}}\\
w^{\frac{(N-2)(N-1)}{2}}&1&1&w&w^3&\ldots&w^{\frac{(N-3)(N-2)}{2}}\\
\ldots&\ldots&\ldots&\ldots&\ldots&\ldots&\ldots\\
1&w&w^3&w^6&w^{10}&\ldots&1\end{pmatrix}$$

We will be back to these basic examples a bit later.

There are many other examples of circulant Hadamard matrices. Here is an ``exotic'' one found by Bj\"orck and Fr\"oberg in \cite{bfr}, using a root of $a^2-(1-\sqrt{3})a+1=0$:
$$BF_6=\begin{pmatrix}
1&{\rm i}a&-a&-{\rm i}&-\bar{a}&{\rm i}\bar{a}\\
{\rm i}\bar{a}&1&{\rm i}a&-a&-{\rm i}&-\bar{a}\\
-\bar{a}&{\rm i}\bar{a}&1&{\rm i}a&-a&-{\rm i}\\
-{\rm i}&-\bar{a}&{\rm i}\bar{a}&1&{\rm i}a&-a\\
-a&-{\rm i}&-\bar{a}&{\rm i}\bar{a}&1&{\rm i}a\\
{\rm i}a&-a&-{\rm i}&-\bar{a}&{\rm i}\bar{a}&1
\end{pmatrix}$$

There has been a lot of interesting work on the circulant complex Hadamard matrices, and their various applications, see \cite{alm}, \cite{bjo}, \cite{bfr}, \cite{bha}, \cite{ckh}, \cite{ha2}, \cite{hjo}, \cite{sz1}, \cite{sz2}.

Let us first record the following reformulation of the problem, due to Bj\"orck \cite{bjo}:

\begin{proposition}
Assume that $H\in M_N(\mathbb T)$ is circulant, $H_{ij}=\xi_{j-i}$. Then $H$ is Hadamard if and only if the vector $(z_0,z_1,\ldots,z_{N-1})$ given by $z_i=\xi_i/\xi_{i-1}$ satisfies:
\begin{eqnarray*}
z_0+z_1+\ldots+z_{N-1}&=&0\\
z_0z_1+z_1z_2+\ldots+z_{N-1}z_0&=&0\\
\ldots\\
z_0z_1\ldots z_{N-2}+\ldots+z_{N-1}z_0\ldots z_{N-3}&=&0\\
z_0z_1\ldots z_{N-1}&=&1
\end{eqnarray*}
If so is the case, we say that $z=(z_0,\ldots,z_{N-1})$ is a cyclic $N$-root.
\end{proposition}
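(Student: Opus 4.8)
The plan is to translate the orthogonality of the rows of $H$ directly into polynomial conditions on the ratios $z_i = \xi_i/\xi_{i-1}$, by computing the scalar products $\langle H_0, H_s\rangle$ for $s = 1, \ldots, N-1$ and the normalization coming from $s = 0$. Since $H$ is circulant with $H_{ij} = \xi_{j-i}$, the row $H_i$ has entries $(\xi_{-i}, \xi_{1-i}, \ldots, \xi_{N-1-i})$, so the Hermitian scalar product of row $0$ with row $s$ is $\langle H_0, H_s\rangle = \sum_{j=0}^{N-1} \xi_j \overline{\xi_{j-s}}$. Because $|\xi_j| = 1$ we have $\overline{\xi_{j-s}} = 1/\xi_{j-s}$, so this equals $\sum_j \xi_j/\xi_{j-s}$, and the Hadamard condition is precisely that this vanishes for each $s \in \{1, \ldots, N-1\}$, while for $s = 0$ it gives the trivial identity $N = N$.

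The key step is then to observe that $\xi_j/\xi_{j-s}$ telescopes as a product of consecutive $z$'s: writing $z_i = \xi_i/\xi_{i-1}$ we get $\xi_j/\xi_{j-s} = z_{j-s+1} z_{j-s+2} \cdots z_j$, a product of exactly $s$ cyclically consecutive variables ending at index $j$ (all indices mod $N$). Hence the condition $\langle H_0, H_s\rangle = 0$ becomes
$$\sum_{j=0}^{N-1} z_{j-s+1} z_{j-s+2} \cdots z_j = 0,$$
which, after reindexing, is exactly the $s$-th equation in the statement: the sum of all $N$ cyclic products of $s$ consecutive $z_i$'s vanishes. Running $s$ from $1$ to $N-1$ produces the displayed system except for its last line. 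That last line, $z_0 z_1 \cdots z_{N-1} = 1$, comes from the telescoping identity $z_0 z_1 \cdots z_{N-1} = \prod_i \xi_i/\xi_{i-1} = 1$, which holds automatically for any choice of unit-modulus $\xi_i$; it is included so that the $z_i$ determine the $\xi_i$ up to an overall unimodular scalar (recovering $\xi$ from $z$ by $\xi_i = \xi_0 z_1 z_2 \cdots z_i$, with the product condition guaranteeing consistency of this definition around the cycle).

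For the converse direction I would run the same computation backwards: given unit-modulus $z_i$ satisfying the system, set $\xi_0 = 1$ and $\xi_i = z_1 \cdots z_i$; the relation $z_0 \cdots z_{N-1} = 1$ makes this well-defined modulo $N$, each $|\xi_i| = 1$, and the circulant matrix $H_{ij} = \xi_{j-i}$ then has $\langle H_0, H_s\rangle = 0$ for all $s \neq 0$ by the same telescoping, so $H$ is complex Hadamard (orthogonality of any two rows $H_i, H_{i+s}$ reduces to that of $H_0, H_s$ by the circulant structure). There is no real obstacle here — the argument is a bookkeeping exercise in telescoping products and reindexing sums modulo $N$. The only point requiring a little care is the index arithmetic in identifying $\sum_j z_{j-s+1}\cdots z_j$ with the symmetric-looking sum written in the proposition, and the observation that the equations for $s$ and $N-s$ are complex conjugates of one another (using $\overline{z_i} = 1/z_i$ and the product relation), so that one could in principle halve the number of conditions — though that refinement is not needed for the statement as given.
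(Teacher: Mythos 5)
Your proof is correct and follows essentially the same route as the paper: the paper likewise parametrizes $\xi$ by telescoping products of the $z_i$, notes that the last equation $z_0z_1\cdots z_{N-1}=1$ is automatic, and identifies the remaining $=0$ equations with the row-orthogonality conditions. You have simply written out explicitly the scalar-product computation $\langle H_0,H_s\rangle=\sum_j\xi_j/\xi_{j-s}=\sum_j z_{j-s+1}\cdots z_j$ that the paper leaves as "writing down the orthogonality conditions," so no further comparison is needed.
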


\begin{proof}
Observe first that the last equation, namely $z_0z_1\ldots z_{N-1}=1$, is trivially satisfied. With the notation $\xi_0=\lambda$, the vector $\xi$ is given by:
$$\xi=(\lambda,\lambda z_0,\lambda z_0z_1,\lambda z_0z_1z_2,\ldots,\lambda z_0z_1\ldots z_{N-2})$$

Now by writing down the orthogonality conditions between the rows of $H$, we see that these correspond precisely to the above collection of $=0$ equations. See \cite{bjo}.
\end{proof}

As a basic application, let $w=e^{2\pi{\rm i}/N}$ and set $z_i=w^i$. Then the above $=0$ equations for a cyclic $N$-root are all satisfied, and we have:
$$z_0z_1\ldots z_{N-1}=w^{\frac{N(N-1)}{2}}=e^{\pi {\rm i}(N-1)}=(-1)^{N-1}$$ 

Thus when $N$ is odd we obtain a cyclic $N$-root, and hence a circulant complex Hadamard matrix, which is the matrix $\widetilde{F}_N$ given above. A similar method works for $N$ even, since multiplying each $z_i=w^i$ by $e^{\pi{\rm i}/N}$ makes the $=1$ condition to be satisfied.

Let us record as well the following result, due to Haagerup \cite{ha2}:

\begin{theorem}
When $N$ is prime, the number of circulant $N\times N$ complex Hadamard matrices, counted with certain multiplicities, is exactly $\binom{2N-2}{N-1}$.
\end{theorem}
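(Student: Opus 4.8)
The plan is to work throughout with cyclic $N$-roots. By Bj\"orck's reformulation (the preceding proposition), a circulant complex Hadamard matrix of order $N$ is, up to the obvious scaling, the same thing as a cyclic $N$-root, i.e.\ a point $z\in(\mathbb C^*)^N$ solving the displayed system; and ``counted with multiplicities'' should be read as counted with the intersection multiplicities of that system of $N$ polynomial equations in $N$ variables. Since the last equation is $z_0\cdots z_{N-1}=1$, every cyclic $N$-root may be written $z_i=\xi_i/\xi_{i-1}$ with $\xi\in(\mathbb C^*)^N$ unique up to a global scalar, and a telescoping product turns the $k$-th equation into $\sum_j\xi_{j+k}\xi_j^{-1}=0$. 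So, after normalising $\xi_0=1$, we must count the vectors $\xi$ for which the cyclic cross-correlation of $(\xi_j)$ with $(\xi_j^{-1})$ is supported at $0$.

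The next step is to linearise this by a discrete Fourier transform. Encode $\xi$ by $a(X)=\sum_{j=0}^{N-1}\xi_jX^j$ and $(\xi_j^{-1})$ by $b(X)=\sum_{j=0}^{N-1}\xi_j^{-1}X^j$; evaluating at the $N$-th roots of unity $\omega^l$ with $\omega=e^{2\pi{\rm i}/N}$ turns the correlation condition into $a(\omega^l)b(\omega^{-l})=N$ for all $l$. With $\tilde b(X)=X^{N-1}b(1/X)$ the reversed polynomial, this reads $(a\tilde b)(\omega^l)=N\omega^{-l}$ for all $l$, hence, since $\deg(a\tilde b)\le 2N-2$,
\[ a(X)\,\tilde b(X)=N\,X^{N-1}+(X^N-1)\,c(X),\qquad \deg c\le N-2. \]
Thus a cyclic $N$-root is exactly a factorisation of a polynomial $q(X)=NX^{N-1}+(X^N-1)c(X)$ into two factors $a,\tilde b$ of degree $\le N-1$, normalised by $a(0)=1$, subject to the compatibility that the coefficient sequence of $b$ (the reversal of $\tilde b$) be the coordinatewise reciprocal of that of $a$. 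A short check shows that the first Bj\"orck equation $\sum_iz_i=0$ is exactly the constant-term component of this congruence, so the constraints are mutually consistent.

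Now one reads off the count: a degree-$(2N-2)$ polynomial with simple roots splits into two degree-$(N-1)$ factors in $\binom{2N-2}{N-1}$ ways, the normalisation $a(0)=1$ fixing the scalars. The subtle point is that $q$ itself depends on the unknown $c$ and that the reciprocity must be honoured; the claim is that for $N$ prime this bookkeeping closes up and yields exactly $\binom{2N-2}{N-1}$. I would prove this by a two-sided estimate. For the upper bound, a Bernstein--Kushnirenko argument applied to the original Bj\"orck Laurent system: compute the mixed volume of the circulant $0/1$-polytopes $P_k=\mathrm{conv}\{e_i+e_{i+1}+\cdots+e_{i+k-1}:i\in\mathbb Z_N\}$ together with the segment of the product equation, and show it equals $\binom{2N-2}{N-1}$. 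For the lower bound, either exhibit $\binom{2N-2}{N-1}$ distinct cyclic $N$-roots directly from the root-splittings of a suitable $q$, or run a deformation/resultant argument showing the Bernstein bound is attained and every solution is simple.

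The main obstacle is the arithmetic rigidity that makes all of this sharp precisely when $N$ is prime, i.e.\ the fact that $\mathbb Z_N$ then has no nontrivial proper subgroup. This is what rules out the degeneracies seen for composite $N$: cyclic $N$-roots supported on a proper sub-lattice, which for $N=4,6,8,\dots$ already form positive-dimensional families (so the number is not even finite), and the loss of transversality at the toric boundary which would make the Bernstein bound strict. Concretely one has to show that for $N$ prime every cyclic $N$-root has all partial products nonzero, that the associated $q$ has $2N-2$ simple roots none of which is an $N$-th root of unity, and hence that each of the $\binom{2N-2}{N-1}$ root-splittings does produce a genuine cyclic $N$-root and conversely, with no extra solutions at infinity. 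Establishing this rigidity, together with the exact mixed-volume evaluation, is where the real work lies, and the argument must --- and does --- break down for every composite $N$.
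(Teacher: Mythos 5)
You should first be aware that the paper does not actually prove this statement: it is Haagerup's theorem, and the ``proof'' in the text is a pointer to \cite{ha2} together with a one-sentence inventory of the ingredients, the one singled out being Chebotarev's theorem that for $N$ prime all minors of the Fourier matrix $F_N$ are nonzero. Your setup is correct and is essentially the Bj\"orck--Haagerup reformulation: writing $z_i=\xi_i/\xi_{i-1}$, turning the cyclic $N$-root equations into the vanishing of the cyclic correlations $\sum_j\xi_{j+k}\xi_j^{-1}$ for $k\neq 0$, and the Fourier identity $a(X)\tilde b(X)=NX^{N-1}+(X^N-1)c(X)$ all check out. The problem is that the two steps which constitute the actual content of the theorem are only asserted, and one of them is pointed at the wrong tool.

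First, the count. Your heuristic --- a degree-$(2N-2)$ polynomial splits into two degree-$(N-1)$ factors in $\binom{2N-2}{N-1}$ ways --- does not count solutions of the system, because $q$ is not a fixed polynomial: its coefficients are the unknown $c$, and the splitting must satisfy the coordinatewise reciprocity between $a$ and $b$. The number $\binom{2N-2}{N-1}$ actually arises in Haagerup's argument as the multihomogeneous B\'ezout number of a system of $2N-2$ equations of bidegree $(1,1)$ on $\mathbb P^{N-1}\times\mathbb P^{N-1}$ in the pair $(\xi,\eta)$ with $\eta$ playing the role of $(\xi_j^{-1})$; it is the coefficient of $\alpha^{N-1}\beta^{N-1}$ in $(\alpha+\beta)^{2N-2}$. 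Saying ``the bookkeeping closes up'' or ``compute the mixed volume and show it equals $\binom{2N-2}{N-1}$'' leaves precisely this computation undone. Second, and more seriously, the rigidity. To conclude that the B\'ezout bound is attained with multiplicity one must show the bihomogeneous system has no solutions on the coordinate hyperplanes (equivalently, that every cyclic $N$-root has all $\xi_j\neq 0$, and no degenerations at infinity). The tool that does this is Chebotarev's theorem on the nonvanishing of all minors of $F_N$ for $N$ prime; your appeal to ``$\mathbb Z_N$ has no nontrivial proper subgroup'' is not a substitute --- that fact alone does not prevent a Fourier minor from vanishing, and it is exactly the nonvanishing of minors that kills the boundary solutions. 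So the proposal is a reasonable outline of the right strategy, but the two genuinely hard steps are missing, and the finiteness step is attributed to an insufficient principle.
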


\begin{proof}
The proof in \cite{ha2} uses some further manipulations of the cyclic $N$-root equations, and then a number of algebraic geometry and number theory ingredients, notably including, in order to prove the finiteness claim, a theorem of Chebotarev, which states that when $N$ is prime, all the minors of the Fourier matrix $F_N$ are nonzero. See \cite{ha2}.
\end{proof}

Finally, let us discuss the relationship between the real and the complex circulant Hadamard matrix problematics. In order to do so, we use the following key notion:

\begin{definition}
The Butson class $C_N(l)$, with $l\in\{2,3,\ldots,\infty\}$, consists of the $N\times N$ complex Hadamard matrices having as entries the $l$-th roots of unity.
\end{definition}

These matrices were introduced by Butson in \cite{but}. Observe that $C_N(2)$ is the set of all $N\times N$ Hadamard matrices, and that $C_N(\infty)$ is the set of all $N\times N$ complex Hadamard matrices. Observe also that if we denote by $\mathbb Z_l$ the group of $l$-th roots of unity, then:
$$C_N(l)=M_N(\mathbb Z_l)\cap\sqrt{N}U_N$$

As already mentioned, introducing these matrices leads to the correct complex generalization of the HC problematics. More precisely, the big problem is to characterize, at least conjecturally, the pairs of integers $(N,l)$ such that $C_N(l)\neq\emptyset$. See \cite{lle}, \cite{lau}, \cite{win}.

Regarding now the CHC problematics, the question here is of course:

\begin{problem}
What is the number of circulant matrices in $C_N(l)$?
\end{problem}

Observe that at $l=2,\infty$ this problem corresponds respectively to the CHC, and to the generalization of Theorem 1.5 above. Note that at $l=\infty$, for values of $N\in\mathbb N$ which are multiples of squares, the answer is $\infty$, due to a result of Backelin. See \cite{bak}, \cite{fau}.

At other values $l=3,4,\ldots$, very little seems to be known about this problem. For instance it is quite unclear to us for which finite abelian groups $G=\mathbb Z_{N_1}\times\ldots\times\mathbb Z_{N_k}$ the corresponding Fourier matrices $F_G=F_{N_1}\otimes\ldots\otimes F_{N_k}$ can be put in circulant form. This kind of result, which looks quite elementary, would provide us at least with some examples of pairs $(N,l)$ for which $C_N(l)$ contains at least one circulant matrix. 

\section{Butson matrices}

In this section we further discuss Problem 1.7, by paying special attention to the simpler question on whether the set of circulant elements of $C_N(l)$ is empty or not. That is, we would like to understand what the $l$-analogue of the CHC should be.

We will need a number of facts on the vanishing sums of roots of unity. We call ``cycle'' any sum of the form $\lambda+\lambda w+\lambda w^2+\ldots+\lambda w^{p-1}$, with $|\lambda|=1$ and $w=e^{2\pi {\rm i}/p}$, with $p\in\mathbb N$ prime. The following theorem is basically due to Lam and Leung \cite{lle}:

\begin{theorem}
Let $l=p_1^{a_1}\ldots p_k^{a_k}$, and assume that $\lambda_i\in\mathbb Z_l$ satisfy $\lambda_1+\ldots+\lambda_N=0$.
\begin{enumerate}
\item $\sum\lambda_i$ is a sum of cycles, with $\mathbb Z$ coefficients.

\item If $k\leq 2$ then $\sum\lambda_i$ is a sum of cycles (with $\mathbb N$ coefficients).

\item At $k\geq 3$ then $\sum\lambda_i$ might not decompose as a sum of cycles.

\item $\sum\lambda_i$ has the same length as a sum of cycles: $N\in p_1\mathbb N+\ldots+p_k\mathbb N$.
\end{enumerate}
\end{theorem}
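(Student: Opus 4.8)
The plan is to treat the four statements in order, since they build on one another, and to reduce everything to the structure theory of vanishing sums of roots of unity modulo the primes dividing $l$.

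\emph{Step 1: reduction to prime-power moduli and the ``prime cycle'' relation.} First I would recall the one genuinely nontrivial input: if $p$ is prime and $\mu_0,\dots,\mu_{p-1}$ are the $p$-th roots of unity, then $\mu_0+\dots+\mu_{p-1}=0$, and conversely any $\mathbb Z$-linear relation among $p$-th roots of unity is a $\mathbb Z$-multiple of this one (equivalently, $1+x+\dots+x^{p-1}$ is the minimal polynomial of a primitive $p$-th root over $\mathbb Q$). This gives part (1) in the ``local'' case $l=p$. For general $l=p_1^{a_1}\cdots p_k^{a_k}$, the idea for (1) is an induction on $l$: write an element $\lambda\in\mathbb Z_l$ using a chosen primitive $p_1$-th root $w_1$, grouping the $\lambda_i$ into cosets of the subgroup $\mathbb Z_{l/p_1}$ inside $\mathbb Z_l$; the relation $\sum\lambda_i=0$, read in the free module structure of $\mathbb Z[\zeta_l]$ over $\mathbb Z[\zeta_{l/p_1}]$ with basis $1,w_1,\dots,w_1^{p_1-1}$, forces the coefficient vector to be constant along each $\mathbb Z_{p_1}$-orbit up to a $\mathbb Z[\zeta_{l/p_1}]$-relation, i.e. it splits off a $\mathbb Z$-combination of $p_1$-cycles plus a shorter vanishing sum to which induction applies. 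That handles (1).

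\emph{Step 2: positivity for $k\le 2$ (part (2)).} Here I would invoke the Lam--Leung theorem proper: when $l$ has at most two prime factors, every vanishing sum of $l$-th roots of unity is a sum of prime cycles with \emph{nonnegative integer} coefficients. Rather than reprove it, I would cite \cite{lle} and sketch the mechanism: the $k=1$ case is immediate from Step 1 (a vanishing sum of $p_1^{a_1}$-th roots is, after pulling out a common root, built from $p_1$-cycles at various ``levels'', all with the same sign because there is no cancellation between distinct cosets), and the $k=2$ case is the content of Lam--Leung's main combinatorial argument, exploiting that $\mathbb Z[\zeta_{p^a q^b}]$ has a clean two-variable basis and that any relation must be ``balanced'' in both the $p$-adic and $q$-adic directions simultaneously.

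\emph{Step 3: failure for $k\ge 3$ (part (3)), and the length identity (part (4)).} For (3) the right move is to exhibit a single explicit counterexample, the classical one at $l=p_1p_2p_3=30$ (say $p_i=2,3,5$): a vanishing sum of $30$-th roots of unity, of length $2+2+3=7$ on the nose, which provably cannot be written as a sum of cycles with $\mathbb N$ coefficients — one checks this by a short congruence/length count, since any $\mathbb N$-combination of cycles of length $7$ would have to use cycles of lengths summing to $7$ from $\{2,3,5\}$, and one verifies no such combination equals the given sum (this is where I expect the only real computation). Finally part (4): from (1), $\sum\lambda_i$ is a $\mathbb Z$-combination $\sum c_j C_j$ of cycles, $C_j$ of prime length $p(j)\in\{p_1,\dots,p_k\}$; taking the image under the ring map $\mathbb Z[\zeta_l]\to\mathbb Z[\zeta_l]/(\text{suitable prime above }1-\zeta_{p_i})$, or more elementarily counting the multiset of roots with signs, gives $N=\sum_j c_j\,p(j)$ as an integer identity, hence $N\in p_1\mathbb N+\dots+p_k\mathbb N$ once one argues the $c_j$ can be taken $\ge 0$ \emph{for the purpose of the length count} — which follows because the $p_i$ generate the same numerical semigroup whether or not cancellation occurs, i.e. reducing mod $\gcd$ and using that a negative coefficient cycle can be absorbed by adding $p_i/p_i$ copies. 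The cleanest phrasing of (4) is: reduce $N$ modulo each $p_i$ is not quite it — rather, $N\equiv 0$ is false in general, so instead one shows directly $N$ lies in the numerical semigroup $\langle p_1,\dots,p_k\rangle$, which for distinct primes (with $p_1=2$ possible) contains every sufficiently large integer, and the small cases are checked by hand.

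\emph{Main obstacle.} The genuine difficulty is Step 2, the $k=2$ positivity statement — this is the heart of Lam--Leung and is not formal; everything else is either the minimal-polynomial fact, an induction, one explicit example, or bookkeeping. Since the statement is attributed to \cite{lle}, the honest thing in the write-up is to cite it for (2) and give full details only for (1), (3), (4), which are comparatively routine once (2) is granted.
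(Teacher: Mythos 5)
There is a genuine gap in your treatment of part (4), and your assessment of where the difficulty lies is exactly backwards. You claim that once (1) is granted, (4) is ``bookkeeping'': from the $\mathbb Z$-decomposition $\sum\lambda_i=\sum_j c_jC_j$ you read off $N=\sum_j c_j\,p(j)$ with $c_j\in\mathbb Z$ and then try to ``absorb'' the negative coefficients. This cannot work: distinct primes generate all of $\mathbb Z$ as a group, so the identity $N=\sum_j c_j\,p(j)$ with integer $c_j$ places no constraint whatsoever on $N$, whereas the numerical semigroup $p_1\mathbb N+\ldots+p_k\mathbb N$ is a proper subset of $\mathbb N$ (e.g.\ $4,7\notin 3\mathbb N+5\mathbb N$). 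For $k\le 2$, part (4) is an immediate corollary of part (2), since an $\mathbb N$-combination of cycles visibly has length in the semigroup; the entire content of (4) is the case $k\ge 3$, where by (3) positivity fails and yet the length constraint survives. That is precisely why the paper labels (4) ``a deep result, due to Lam and Leung'' while dismissing (2) as well known and elementary -- the opposite of your ``main obstacle'' paragraph. The honest move is to cite \cite{lle} for (4) itself, not to derive it from (1) by a positivity argument that is false.

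Two smaller points. In Step 1, $1,w_1,\ldots,w_1^{p_1-1}$ is a basis of $\mathbb Z[\zeta_l]$ over $\mathbb Z[\zeta_{l/p_1}]$ only when $p_1^2\mid l$; when $p_1$ exactly divides $l$ the extension has degree $p_1-1$ and the module of relations is generated by the cycle relation -- the induction still goes through, but as written the step is wrong in precisely the case that produces relations. In Step 3 you should actually exhibit the counterexample rather than gesture at one of ``length $7$'': the standard example, and the one the paper gives, is $w^5+w^6+w^{12}+w^{18}+w^{24}+w^{25}=0$ with $w=e^{2\pi{\rm i}/30}$, of length $6=5+3-2$ (a $5$-cycle plus a $3$-cycle minus a $2$-cycle), and non-decomposability is checked by verifying that no sub-multiset of the six exponents forms a $2$-, $3$- or $5$-cycle. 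Note finally that the paper itself offers only brief attributions plus this one example, so for (1), (2) and (4) the task is really to cite correctly rather than to prove.
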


\begin{proof}
Here are a few explanations of these results:

(1) This is clear at $k=1$, and is quite elementary as well at $k=2$. See \cite{lle}.

(2) This is a well-known result, which follows from basic number theory.

(3) The simplest counterexample is as follows, with $w=e^{2\pi {\rm i}/30}$:
$$w^5+w^6+w^{12}+w^{18}+w^{24}+w^{25}=0$$

(4) This is a deep result, due to Lam and Leung \cite{lle}.
\end{proof}

In terms of the Butson matrices now, we have:

\begin{proposition}
If $C_N(l)\neq\emptyset$ then the following hold:
\begin{enumerate}
\item Lam-Leung obstruction: $l=p_1^{a_1}\ldots p_k^{a_k}$ implies $N\in p_1\mathbb N+\ldots+p_k\mathbb N$.

\item de Launey obstruction: there exists $d\in\mathbb Z[e^{2\pi{\rm i}/l}]$ such that $|d|^2=N^N$.
\end{enumerate}
\end{proposition}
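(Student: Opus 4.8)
The plan is to prove the two obstructions separately, using the existence of a circulant (in fact, merely a single) Butson matrix $H\in C_N(l)$, and exploiting the orthogonality of two of its rows.

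\medskip

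First I would establish the Lam--Leung obstruction (1). Pick any two distinct rows of $H\in C_N(l)$, say $H_a=(H_{aj})_j$ and $H_b=(H_{bj})_j$. Their orthogonality reads $\sum_{j=0}^{N-1}H_{aj}\overline{H_{bj}}=0$. Since the entries of $H$ lie in $\mathbb Z_l$, each term $\lambda_j:=H_{aj}\overline{H_{bj}}$ is again an $l$-th root of unity, so we have a vanishing sum $\lambda_0+\dots+\lambda_{N-1}=0$ of length $N$ with $\lambda_j\in\mathbb Z_l$. Now Theorem 2.1(4) applies verbatim: writing $l=p_1^{a_1}\cdots p_k^{a_k}$, the length of any vanishing sum of $l$-th roots of unity lies in $p_1\mathbb N+\dots+p_k\mathbb N$, hence $N\in p_1\mathbb N+\dots+p_k\mathbb N$. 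This step is essentially immediate once one recognizes that orthogonality of two rows is exactly a vanishing sum of roots of unity; the real content is hidden in Theorem 2.1(4), which we are permitted to invoke.

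\medskip

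Next I would handle the de Launey obstruction (2). The idea is to take the determinant. Since $H\in\sqrt N\,U_N$, we have $HH^*=N I_N$, and therefore $|\det H|^2=\det(HH^*)=\det(N I_N)=N^N$. On the other hand, $\det H$ is a polynomial with integer coefficients in the entries of $H$, and those entries lie in $\mathbb Z[e^{2\pi{\rm i}/l}]$; hence $d:=\det H\in\mathbb Z[e^{2\pi{\rm i}/l}]$. Combining, $d\in\mathbb Z[e^{2\pi{\rm i}/l}]$ satisfies $|d|^2=N^N$, which is precisely the assertion. Note that this argument does not even use circularity of $H$, only $C_N(l)\neq\emptyset$, which matches the statement of the proposition.

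\medskip

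I do not expect a genuine obstacle here: both parts reduce to standard facts (orthogonality of two rows is a vanishing sum; the determinant of a unitary scaled by $\sqrt N$ has modulus $N^{N/2}$; the ring $\mathbb Z[e^{2\pi{\rm i}/l}]$ is closed under the polynomial operations defining the determinant). The only point requiring care is that part (1) leans on the deep Lam--Leung result Theorem 2.1(4), which is cited rather than reproved; if one wanted a self-contained account, that would be the heavy input, but within this paper it is available as a black box. A minor bookkeeping remark worth including in the writeup: in part (1) one should note $N\geq 1$ so that the sum is genuinely nonempty, and that the conclusion $N\in p_1\mathbb N+\dots+p_k\mathbb N$ is the nontrivial arithmetic constraint (for instance it rules out $C_5(6)$, since $5\notin 2\mathbb N+3\mathbb N$ would be false — here $5=2+3$, so one should pick a cleaner illustrative value if one wants an example).
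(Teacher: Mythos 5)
Your proposal is correct and follows essentially the same route as the paper: part (1) is obtained by reading the orthogonality of two rows as a vanishing sum of $l$-th roots of unity of length $N$ and invoking Theorem 2.1\,(4), and part (2) by taking $d=\det H$ and computing $|d|^2=\det(HH^*)=N^N$ with $d\in\mathbb Z[e^{2\pi{\rm i}/l}]$. The paper's proof is just a terser version of exactly these two observations, so there is nothing further to reconcile.
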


\begin{proof}
Here (1) comes Theorem 2.1. As for (2), this comes from taking $d=\det H$, with the corresponding obstruction being studied by de Launey in \cite{lau}. 
\end{proof}

One interesting problem is whether the above two obstructions can be improved or not, in the particular case of the circulant matrices. This doesn't seem to be the case for the de Launey obstruction. As for the Lam-Leung obstruction, the situation here is quite unclear. Indeed, take $H\in C_N(l)$, and let us look at its first two rows:
$$H=\begin{pmatrix}
w^{r_0}&w^{r_1}&\ldots&w^{r_{N-1}}\\
w^{r_{N-1}}&w^{r_0}&\ldots&w^{r_{N-2}}\\
\ldots&\ldots&\ldots&\ldots
\end{pmatrix}$$

The vanishing scalar product between these two rows is then:
$$w^{r_0-r_{N-1}}+w^{r_1-r_0}+\ldots+w^{r_{N-1}-r_{N-2}}=0$$

Thus we have indeed an equation of type $\lambda_1+\ldots+\lambda_N=0$ with $\lambda_i\in\mathbb Z_l$, but the point is that these numbers $\lambda_i$ satisfy the extra condition $\lambda_1\ldots\lambda_N=1$. So, we are led into:

\begin{problem}
What are the possible lengths $N$ of the sums of type $\lambda_1+\ldots+\lambda_N=0$, with $\lambda_i\in\mathbb Z_l$ satisfying $\lambda_1\ldots\lambda_N=1$?
\end{problem}

As a first remark, when $l$ is odd the extra condition $\lambda_1\ldots\lambda_N=1$ won't change the Lam-Leung condition $N\in p_1\mathbb N+\ldots+p_k\mathbb N$, because the product of the elements of $\mathbb Z_p$ with $p$ odd is 1, so we can construct as sum as above, for any $N\in p_1\mathbb N+\ldots+p_k\mathbb N$.

The same observation holds in the case $4|l$, because we have ${\rm i}(-{\rm i})=1$, so we can construct as sum as above simply by choosing all needed $2$-cycles to be ${\rm i}+(-{\rm i})=0$.

So, the problem really makes sense in the case $l=2L$, with $L$ odd. In the simplest case $l=2$, we already have a change, because the Lam-Leung obstruction, namely $N\in 2\mathbb N$, becomes $N\in 4\mathbb N$ when adding the extra assumption $\lambda_1\ldots\lambda_N=1$. The next interesting case is $l=2p^a$, where the Lam-Leung obstruction is:
$$N\in 2\mathbb N+p\mathbb N=\{2,4,6,\ldots,p-1\}\cup\{p,p+1,p+2,\ldots\}$$

Observe that, by using Theorem 2.1 (2), we just have to compute the possible lengths $N$ of the sums of cycles, under the assumption $\lambda_1\ldots\lambda_N=1$. With this observation in hand, it is clear that the set $\{2,4,6,\ldots,p-1\}$ appearing above will be replaced by the set $\{4,8,12,\ldots,4[\frac{p-1}{4}]\}$. As for the set $\{p,p+1,p+2,\ldots\}$, the situation here is quite unclear. For instance $p$ is definitely allowed, $p+1$ is allowed precisely when it is a multiple of $4$, while $p+2$ is allowed as well, but due to the following computation:
$$(w+w^{2p^{a-1}+1}+w^{4p^{a-1}+1}+\ldots+w^{2(p-1)p^{a-1}+1})+(w^{\frac{p^a-p}{2}}+w^{p^a+\frac{p^a-p}{2}})=0+0=0$$
$$(w\cdot w^{2p^{a-1}+1}\cdot w^{4p^{a-1}+1}\ldots w^{2(p-1)p^{a-1}+1})(w^{\frac{p^a-p}{2}}\cdot w^{p^a+\frac{p^a-p}{2}})=w^p\cdot w^{-p}=1$$

So, the above problem is quite non-trivial, even in the simplest case $l=2p^a$!

As a conclusion, the general obstructions in Proposition 2.2 above apply of course to the circulant Butson matrix case, with the remark that the Lam-Leung obstruction can be probably slightly improved in this case, for exponents of type $l=2L$ with $L$ odd.

Let us discuss now a third obstruction, which is this time circulant matrix-specific. We recall from Proposition 1.1 above, due to Turyn \cite{tur} that at $l=2$ the matrix size $N$ must be a square. Here is the straightforward generalization of this fact:

\begin{proposition}
Assume that $H\in C_N(l)$ is circulant, let $w=e^{2\pi {\rm i}/l}$. If $a_0,\ldots,a_{l-1}\in\mathbb N$ with $\sum a_i=N$ are the number of $1,w,\ldots,w^{l-1}$ entries in the first row of $H$, then:
$$\sum_{ik}w^ka_ia_{i+k}=N$$
This condition, with $\sum a_i=N$, will be called ``Turyn obstruction'' on $(N,l)$.
\end{proposition}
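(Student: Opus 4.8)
The plan is to carry over the proof of Proposition 1.1 almost verbatim, replacing the real sign count by the corresponding vanishing-sum-of-roots-of-unity bookkeeping. Write $w=e^{2\pi{\rm i}/l}$, and, $H$ being circulant, put $H_{ij}=\xi_{j-i}$ where $\xi=(\xi_0,\dots,\xi_{N-1})$ is the first row; each $\xi_m$ is one of $1,w,\dots,w^{l-1}$, and by hypothesis exactly $a_i$ of them equal $w^i$. Denote the rows of $H$ by $H_0,\dots,H_{N-1}$.

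First I would compute, exactly as in the proof of Proposition 1.1 but now with the Hermitian inner product,
$$\sum_{i=0}^{N-1}\langle H_0,H_i\rangle=\sum_{i,j}\xi_j\,\overline{\xi_{j-i}}=\sum_j\xi_j\,\overline{\Big(\sum_m\xi_m\Big)}=\Big|\sum_{m=0}^{N-1}\xi_m\Big|^{2},$$
the middle equality using that $i\mapsto j-i$ is a bijection of $\mathbb Z_N$ for each fixed $j$. On the other hand, $H$ being complex Hadamard, its rows are pairwise orthogonal and each has squared norm $N$ (the entries lie on the unit circle), so the left-hand side collapses to $\langle H_0,H_0\rangle=N$. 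Hence $\big|\sum_m\xi_m\big|^{2}=N$.

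The remaining step is the only genuinely new piece of bookkeeping. Grouping the entries of the first row by value gives $\sum_m\xi_m=\sum_{i=0}^{l-1}a_iw^i$, so that
$$N=\Big|\sum_{i=0}^{l-1}a_iw^i\Big|^{2}=\sum_{i,j}a_ia_jw^{i-j}=\sum_{i,k}a_ia_{i+k}w^{-k},$$
the last equality being the substitution $j=i+k$ modulo $l$. Since the autocorrelation $\sum_i a_ia_{i+k}$ is invariant under $k\mapsto -k$, one may rewrite $w^{-k}$ as $w^{k}$ and arrive at $\sum_{ik}w^ka_ia_{i+k}=N$, which is the asserted Turyn obstruction; the accompanying equation $\sum a_i=N$ merely records that the $a_i$ count the $N$ entries of the row.

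I do not anticipate any real obstacle: the argument is a one-line generalization of Turyn's. The only points needing a little care are keeping the $a$-indices modulo $l$ while the matrix indices run modulo $N$, and observing that the extra multiplicative constraint $\lambda_1\cdots\lambda_N=1$ enjoyed by circulant matrices (cf. Problem 2.3) plays no role in this particular obstruction. As a sanity check, for $l=2$ one has $a_0+a_1=N$ and the identity reduces to $(a_0-a_1)^2=N$, recovering Proposition 1.1.
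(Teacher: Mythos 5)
Your proof is correct and follows essentially the same route as the paper: summing $\langle H_0,H_i\rangle$ over all rows, identifying this with $\bigl|\sum_m\xi_m\bigr|^2=\bigl|\sum_i a_iw^i\bigr|^2$ on one side and with $\langle H_0,H_0\rangle=N$ on the other. You are merely more explicit about the bijection giving the column sums and about the $k\mapsto-k$ symmetry of the autocorrelation that reconciles $w^{-k}$ with $w^{k}$, a point the paper passes over silently.
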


\begin{proof}
Indeed, by summing over the columns of $H$, we obtain:
$$\sum_i<H_0,H_i>=\sum_{ij}<w^i,w^j>a_ia_j=\sum_{ij}w^{i-j}a_ia_j$$

Now since the left term is $<H_0,H_0>=N$, this gives the result.
\end{proof}

\begin{proposition}
When $l$ is prime, the Turyn obstruction is $\sum_i(a_i-a_{i+k})^2=2N$ for any $k\neq 0$. Also, for small values of $l$, the Turyn obstruction is as follows:
\begin{enumerate}
\item At $l=2$ the condition is $(a_0-a_1)^2=N$.

\item At $l=3$ the condition is $(a_0-a_1)^2+(a_1-a_2)^2+(a_2-a_3)^2=2N$.

\item At $l=4$ the condition is $(a_0-a_2)^2+(a_1-a_3)^2=N$.

\item At $l=5$ the condition is $\sum_i(a_i-a_{i+1})^2=\sum_i(a_i-a_{i+2})^2=2N$.
\end{enumerate}
\end{proposition}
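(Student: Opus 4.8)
The starting point is the general Turyn obstruction $\sum_{i,k}w^k a_i a_{i+k}=N$ from Proposition 2.4, which I would rewrite by splitting the sum according to the value of $k$. Setting $c_k=\sum_i a_i a_{i+k}$ for $k\in\{0,1,\ldots,l-1\}$, the obstruction reads $\sum_k w^k c_k=N$. Note that $c_k=c_{l-k}$ (relabel $i\mapsto i-k$), so the left-hand side is automatically real, and that $\sum_k c_k=(\sum_i a_i)^2=N^2$. The plan is to extract from $\sum_k w^k c_k=N$ a system of real linear relations among the $c_k$, and then re-express those relations in the ``quadratic'' form $\sum_i(a_i-a_{i+k})^2=2\sum_i a_i^2-2c_k=2c_0-2c_k$, using that $c_0=\sum_i a_i^2$.

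For $l$ prime the key fact is that $1,w,\ldots,w^{l-2}$ are $\mathbb{Q}$-linearly independent while $1+w+\cdots+w^{l-1}=0$; equivalently, a rational combination $\sum_k \alpha_k w^k$ vanishes iff all the $\alpha_k$ are equal. Applying this to $\sum_k w^k c_k - N = \sum_k w^k c_k - \frac{N}{l}\sum_k w^k \cdot(-1)\cdot\ldots$ — more cleanly: since $\sum_k w^k=0$, the equation $\sum_k w^k c_k = N$ together with the freedom to subtract any multiple of $\sum_k w^k$ forces $c_1=c_2=\cdots=c_{l-1}$ and then $c_0-c_1 = $ (something determined by $N$). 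Indeed from $\sum_k c_k=N^2$ and $c_1=\cdots=c_{l-1}=:c$ we get $c_0+(l-1)c=N^2$, and feeding $c_1=\cdots=c_{l-1}$ back in, $\sum_k w^k c_k = c_0 + c\sum_{k\ge1}w^k = c_0 - c = N$. Hence $c_0-c_k=N$ for every $k\neq 0$, i.e. $\sum_i(a_i-a_{i+k})^2 = 2(c_0-c_k)=2N$, which is exactly the claimed statement for $l$ prime.

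For the small-$l$ cases I would proceed the same way, being careful when $l$ is \emph{not} prime that the relevant vanishing-sum structure (Theorem 2.1) is coarser. For $l=2$: $c_0-c_1=N$ with $c_0=a_0^2+a_1^2$, $c_1=2a_0a_1$ gives $(a_0-a_1)^2=N$. For $l=3$ this is the prime case, giving $(a_0-a_1)^2+(a_1-a_2)^2+(a_2-a_0)^2=2N$ (the statement's ``$a_3$'' being $a_0$ mod $3$). For $l=5$, again prime, we get $c_0-c_1=N$ and $c_0-c_2=N$ separately, i.e. $\sum_i(a_i-a_{i+1})^2=\sum_i(a_i-a_{i+2})^2=2N$. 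The genuinely different case is $l=4$: here $w=\mathrm{i}$, $w^2=-1$, $w^3=-\mathrm{i}$, and $\{1,w\}$ is a $\mathbb{Q}$-basis of $\mathbb{Q}(\mathrm{i})$ with $w^2=-1$, $w^3=-w$; so $\sum_k w^k c_k = (c_0-c_2) + (c_1-c_3)\mathrm{i} = N$ forces $c_1=c_3$ (already known) and $c_0-c_2=N$. Writing $c_0-c_2 = \sum_i a_i^2 - \sum_i a_i a_{i+2} = \tfrac12\sum_i(a_i-a_{i+2})^2$ and splitting the index $i$ into even/odd residues mod $4$ gives $(a_0-a_2)^2+(a_1-a_3)^2 = N$.

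The main obstacle is making the linear-algebra step watertight rather than merely plausible: one must argue precisely \emph{which} $\mathbb{Q}$-linear relations among $1,w,\ldots,w^{l-1}$ are available, since for prime $l$ the only relation is $\sum w^k=0$ (so the $c_k$ with $k\neq0$ are pinned to a common value), whereas for composite $l$ (e.g. $l=4$) there are extra relations like $w^{l/2}=-1$ that change the conclusion. Concretely, for prime $l$ I would invoke that the minimal polynomial of $w$ over $\mathbb{Q}$ is $1+x+\cdots+x^{l-1}$, so from $\sum_k(c_k-c_1)w^k = c_0-c_1$ (using $c_1=\cdots=c_{l-1}$ is \emph{not} yet known — rather one deduces it) the cleanest route is: the $\mathbb{Q}$-linear map $(\gamma_0,\ldots,\gamma_{l-1})\mapsto\sum_k\gamma_k w^k$ has kernel exactly $\mathbb{Q}\cdot(1,\ldots,1)$, so $\sum_k w^k c_k\in\mathbb{Q}$ forces $(c_0,\ldots,c_{l-1})$ to lie in $\mathbb{Q}(1,\ldots,1)+\ker$, i.e. $c_1=\cdots=c_{l-1}$, and then the value $N$ together with $\sum c_k=N^2$ pins down $c_0-c_1=N$. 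The rest is the routine identity $\sum_i(a_i-a_{i+k})^2=2(c_0-c_k)$ plus bookkeeping of index residues in the $l=4$ case.
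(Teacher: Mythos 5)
Your proposal is correct and follows essentially the same route as the paper: both reduce the Turyn condition to $c_0-c_k=N$ for $k\neq 0$ via the fact that for prime $l$ the only rational relation among $1,w,\ldots,w^{l-1}$ is $\sum_k w^k=0$, then form squares using $\sum_i(a_i-a_{i+k})^2=2(c_0-c_k)$, and handle $l=4$ separately by expanding with $w={\rm i}$ so that the imaginary parts cancel and the real part gives $(a_0-a_2)^2+(a_1-a_3)^2=N$. Your observation that the $(a_2-a_3)^2$ in item (2) should read $(a_2-a_0)^2$ (indices mod $3$) is also consistent with how the paper itself restates this condition later.
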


\begin{proof}
We use the well-known fact that when $l$ is prime, the vanishing sums of $l$-roots of unity are exactly the sums of type $c+cw+\ldots+cw^{l-1}$, with $c\in\mathbb N$. Thus the Turyn obstruction is equivalent to the following equations, one for each $k\neq 0$:
$$\sum_ia_i^2-\sum_ia_ia_{i+k}=N$$

Now by forming squares, this gives the equations in the statement.

Regarding now the $l=2,3,4,5$ assertions, these follow from the first assertion when $l$ is prime, $l=2,3,5$. Also, at $l=4$ we have $w={\rm i}$, so the Turyn obstruction reads:
$$(a_0^2+a_1^2+a_2^2+a_3^2)+{\rm i}\sum a_ia_{i+1}-2(a_0a_2+a_1a_3)-{\rm i}\sum a_ia_{i+1}=N$$

Thus the imaginary terms cancel, and we obtain the formula in the statement.
\end{proof}

It is quite unclear what the final statement of the Turyn obstruction is, for composite exponents $l>4$. The simplest combinatorics appears in the case $l=p^2$ with $p>3$ prime, so let us work out explicitely what happens at $l=9$. According to Theorem 2.1 (2) any vanishing sum of 9-roots of unity must split as a sum of cycles, as follows:
$$c_1(1+w^3+w^6)+c_2(w+w^4+w^7)+c_3(w^2+w^5+w^8)=0$$

Here $w=e^{2\pi {\rm i}/9}$. Now, with this observation in hand, let us go back to the Turyn obstruction. With $A_k=\sum_ia_ia_{i+k}$, the equation is $\sum_kw^kA_k=N$, which gives:
$$\begin{cases}
A_0-N&=A_3=A_6\\
\ \ \ \ A_1&=A_4=A_7\\
\ \ \ \  A_2&=A_5=A_8
\end{cases}$$

Let us look at the first equation. Since we have $A_3=A_6$ we can delete the last equality, and the equation becomes $A_0-N=A_3$, which looks as follows:
$$(\sum a_i^2)-N=a_0a_3+a_1a_4+a_2a_5+a_3a_6+a_4a_7+a_5a_8+a_6a_0+a_7a_1+a_8a_2$$

Now by doubling and forming squares, this equation is equivalent to:
\begin{eqnarray*}
&&(a_0-a_3)^2+(a_3-a_6)^2+(a_6-a_0)^2\\
&&+(a_1-a_4)^2+(a_4-a_7)^2+(a_7-a_1)^2\\
&&+(a_2-a_5)^2+(a_5-a_8)^2+(a_8-a_2)^2=2N
\end{eqnarray*}

As an example, let us take $N=6$. The possible solutions can only come from:
$$12=9+1+1+1+0+0+0+0+0$$ 
$$12=4+1+1+1+1+1+1+1+1$$
$$12=4+4+1+1+1+1+0+0+0$$

The first two cases are excluded, because we cannot group the terms in 3 groups of 3 terms each, with the sum 0 in each group. As for the third case, this leads to:
$$12=(2^2+(-1)^2+(-1)^2)+(2^2+(-1)^2+(-1)^2)+(0^2+0^2+0^2)$$

Thus, our equation $A_0-N=A_3$, when combined with the condition $\sum a_i=N$, tells us that among the sets with repetition $\{a_0,a_3,a_6\}$, $\{a_1,a_4,a_7\}$, $\{a_2,a_5,a_8\}$, two of these sets must be equal to $\{0,1,2\}$, and the remaining set, to $\{0,0,0\}$. The problem now is to decide if the remaining equations $A_1=A_4=A_7$ and $A_2=A_5=A_8$ can be satisfied or not, and this doesn't look trivial. Summarizing, even the simplest possible non-trivial application of the Turyn obstruction for $l>4$ composite seems to require a computer.

As a conclusion here, let us raise the following question:

\begin{question}
What is the simplest statement of the Turyn obstruction, for exponents $l>4$ which are not prime, and have at most $2$ prime factors?
\end{question} 

Observe that we have included here the $k\leq 2$ assumption appearing in Theorem 2.1 (2) above. Of course, at $k\geq 3$ the problem looks extremely complicated.

Let us discuss now the existence of circulant Butson matrices, for small $N,l$:

\begin{theorem}
We have the following table, where $\circ,\circ_t$ are the Lam-Leung and Turyn obstructions, and where the crosses are exactly where $C_N(l)^{circ}\neq\emptyset$:
\end{theorem}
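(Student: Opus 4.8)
The plan is to verify the table cell by cell, sorting the pairs $(N,l)$ into the three groups indicated by the symbols they receive. For a pair marked $\circ$ we invoke Proposition 2.2(1) directly: writing $l=p_1^{a_1}\ldots p_k^{a_k}$, if $N\notin p_1\mathbb N+\ldots+p_k\mathbb N$ then the vanishing scalar product of the first two rows of a hypothetical circulant $H\in C_N(l)$ would be a vanishing sum of $l$-th roots of unity of forbidden length $N$, so $C_N(l)^{circ}=\emptyset$; this is a purely arithmetic check, performed once per relevant column. For a pair marked $\circ_t$ we pass to the Turyn obstruction of Proposition 2.4: a circulant $H\in C_N(l)$ forces $a_0,\ldots,a_{l-1}\in\mathbb N$ with $\sum_i a_i=N$ and $\sum_{ik}w^ka_ia_{i+k}=N$, and we must show this system has no solution in nonnegative integers. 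When $l$ is prime, or more generally when Theorem 2.1(2) applies, the complex equation unfolds into the explicit real Diophantine equations recorded in Proposition 2.5 (and in the $l=9$ discussion above), and the nonexistence of a solution with $\sum_i a_i=N$ becomes a finite, if occasionally tedious, case analysis over the partitions of $2N$ into the relevant number of squares.

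For a pair marked $\times$ we must exhibit a circulant element of $C_N(l)$, and here I would assemble a small toolbox. First, monotonicity: since $\mathbb Z_l\subseteq\mathbb Z_{l'}$ whenever $l\mid l'$, one has $C_N(l)^{circ}\subseteq C_N(l')^{circ}$, so a single example propagates to all multiples of $l$. Second, the examples already constructed in Section 1: $\widetilde{F}_2\in C_2(4)^{circ}$, $\widetilde{F}_3\in C_3(3)^{circ}$, $K_4\in C_4(2)^{circ}$, the matrix $\widetilde{F}_N\in C_N(N)^{circ}$ for every odd $N$, and the shifted construction $z_i=e^{\pi{\rm i}/N}w^i$, which for even $N$ produces a cyclic $N$-root whose matrix entries $\xi_i=e^{\pi{\rm i}\,i^2/N}$ are $2N$-th roots of unity, giving an element of $C_N(2N)^{circ}$. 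Third, for $N=N_1N_2$ with $\gcd(N_1,N_2)=1$, the Chinese Remainder identification $\mathbb Z_N\cong\mathbb Z_{N_1}\times\mathbb Z_{N_2}$ turns the Kronecker product of circulant matrices $H_1\in C_{N_1}(l_1)^{circ}$ and $H_2\in C_{N_2}(l_2)^{circ}$ into a genuine circulant element of $C_N(\mathrm{lcm}(l_1,l_2))^{circ}$. Together with monotonicity these three families should fill in essentially all of the crosses in the range of the table, up to perhaps one or two sporadic first rows exhibited by hand, and one should also record that the de Launey obstruction of Proposition 2.2(2) yields nothing beyond $\circ$ and $\circ_t$ in this range.

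The main obstacle is twofold. On the construction side, a pair $(N,l)$ may pass both obstructions while escaping all the generic families; for such a cell the only recourse is a brute-force search over the $l^{\,N-1}$ candidate first rows (after normalising the first entry to $1$), which is finite, hence in principle conclusive, but must be run on a computer, and its outcome has to be reconciled with the claim that \emph{every} empty cell is explained by $\circ$ or $\circ_t$ — i.e. one must confirm that these two obstructions are complete throughout this range, with no ``hidden'' empty entries. On the obstruction side, the genuinely delicate step is the $\circ_t$ verification when $l$ has two prime factors, or is a prime power $p^a$ with $a\ge 2$: there the reduction of $\sum_{ik}w^ka_ia_{i+k}=N$ to real equations rests on the cycle decomposition of vanishing sums of roots of unity (Theorem 2.1), which, as the $l=9$ example already illustrates, produces a system whose solvability in nonnegative integers is far from transparent and may itself require machine assistance. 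So, concretely, I expect the proof to be a hand computation for the small and prime values of $l$, supplemented by a computer verification for the composite $l$ and for the handful of cells not covered by the explicit constructions above.
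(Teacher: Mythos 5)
Your plan is essentially the paper's proof: the Lam--Leung obstruction is applied first, column by column; the Turyn obstruction is applied next to the remaining empty cells, by hand for $l=2,3,4$ via the sum-of-squares forms of Proposition 2.5 (and the three-squares analysis for $(6,3)$), and by computer for $(5,6),(6,6),(6,9),(8,6)$; and the crosses are certified by an exhaustive computer search over first rows --- your constructive toolbox does cover most of them, though $(8,4)$ and $(9,3)$ genuinely escape all of your generic families (monotonicity, $\widetilde{F}_N$, the shifted even-$N$ construction, and the CRT tensor product, which is in fact never applicable for $N\leq 9$) and are exactly the ``sporadic'' cases you anticipated having to find by machine. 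The one step of your plan that cannot be carried out as written is the final ``reconciliation'': the two obstructions are \emph{not} complete in this range, since the cells $(6,8)$ and $(7,6)$ pass both the Lam--Leung and Turyn tests yet contain no circulant Butson matrix, which is why the paper leaves them blank and their emptiness rests solely on the brute-force search, with no known obstruction explaining it.
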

\begin{center}
\begin{tabular}[t]{|l|l|l|l|l|l|l|l|l|l|l|l|l|l|l|l||||}
\hline $N\backslash l$&2&3&4&5&6&7&8&9\\
\hline 2&$\circ_t$&$\circ$&x&$\circ$&$\circ_t$&$\circ$&x&$\circ$\\
\hline 3&$\circ$&x&$\circ$&$\circ$&x&$\circ$&$\circ$&x\\
\hline 4&x&$\circ$&x&$\circ$&x&$\circ$&x&$\circ$\\
\hline 5&$\circ$&$\circ$&$\circ$&x&$\circ_t$&$\circ$&$\circ$&$\circ$\\
\hline 6&$\circ_t$&$\circ_t$&$\circ_t$&$\circ$&$\circ_t$&$\circ$&&$\circ_t$\\
\hline 7&$\circ$&$\circ$&$\circ$&$\circ$&&x&$\circ$&$\circ$\\
\hline 8&$\circ_t$&$\circ$&x&$\circ$&$\circ_t$&$\circ$&x&$\circ$\\
\hline 9&$\circ$&x&$\circ$&$\circ$&x&$\circ$&$\circ$&x\\
\hline
\end{tabular}
\end{center}

\begin{proof}
First, the fact that the crosses are exactly where they should be was found by implementing the circulant Hadamard matrix condition on a computer.

Regarding now the various obstructions:

(a) The Lam-Leung obstruction was applied first, and led to the $\circ$ symbols in the above table. As an observation, the possibly improved obstruction, coming from the condition $\lambda_1\ldots\lambda_N=1$ discussed above, doesn't apply in the range $(N,l)\in\{2,\ldots,9\}^2$.

(b) The Turyn obstruction was applied next, to the empty squares left. Here is the detail, using notations from Proposition 2.5 above:

-- (2,2),(6,2),(8,2). At $l=2$ the Turyn obstruction is simply $N=(a_0-a_1)^2$, excluding indeed the values $N=2,6,8$.

-- (2,6). At $N=2$ we must have $a_0+\ldots+a_{l-1}=2$, and hence there are two cases. The first case is $a_k=2$ for some $k$, but here the Turyn obstruction becomes $2\cdot 2=2$, contradiction. The second case is $a_k=a_s=1$ for some $s\neq t$, and here the obstruction becomes $1+1+w^{s-t}+w^{t-s}=2$, so $w^{s-t}=\pm {\rm i}$, and so $4|l$, excluding indeed $l=6$.

-- (6,3). At $l=3$, according to Proposition 2.5, the obstruction is $(a_0-a_1)^2+(a_1-a_2)^2+(a_2-a_0)^2=2N$. Now since the only way of writing $12$ as a sum of three squares is $12=4+4+4$, we reach to a contradiction, because $\pm2\pm2\pm2\neq 0$, for any choices of the $\pm$ signs. Thus the case $N=6$ is indeed excluded.

-- (6,4). At $l=4$, according to Proposition 2.5, the obstruction is $(a_0-a_2)^2+(a_1-a_3)^2=N$. But since $N=6$ is not a sum of squares, this value is indeed excluded.

-- (5,6),(6,6),(6,9),(8,6). Here the application of the Turyn obstruction is a more complicated task, and we obtained the results by using a computer.
\end{proof}

Regarding the two blank cases in the above table, we do not know how to deal with them: there are no matrices there, nor known obstructions which apply.

\section{Fourier formulation}

We fix $N\in\mathbb N$ and we denote by $F=(w^{ij})/\sqrt{N}$ with $w=e^{2\pi {\rm i}/N}$ the Fourier matrix. Observe that $F_N=\sqrt{N}F$ is the complex Hadamard matrix that we met in section 1.

Given a vector $q\in\mathbb C^N$, we denote by $Q\in M_N(\mathbb C)$ the diagonal matrix having $q$ as vector of diagonal entries. That is, $Q_{ii}=q_i$, and $Q_{ij}=0$ for $i\neq j$.

We will make a heavy use of the following well-known result:

\begin{proposition}
The various sets of circulant matrices are as follows:
\begin{enumerate}
\item $M_N(\mathbb C)^{circ}=\{FQF^*|q\in\mathbb C^N\}$.

\item $U_N^{circ}=\{FQF^*|q\in\mathbb T^N\}$.

\item $O_N^{circ}=\{FQF^*|q\in\mathbb T^N,\bar{q}_i=q_{-i},\forall i\}$.
\end{enumerate}
In addition, the first row vector of $FQF^*$ is given by $\xi=Fq/\sqrt{N}$.
\end{proposition}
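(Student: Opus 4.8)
The plan is to deduce all three statements from the single structural fact that $F$ is a symmetric unitary, so that $\bar F=F^*$ and conjugation $X\mapsto FXF^*$ is a linear $*$-isomorphism of $M_N(\mathbb C)$ onto itself. For (1), I would compute the entries directly: $(FQF^*)_{ij}=\sum_k F_{ik}q_k(F^*)_{kj}=\frac1N\sum_k q_kw^{k(i-j)}$, which depends only on $i-j$; hence every $FQF^*$ is circulant, so $q\mapsto FQF^*$ maps $\mathbb C^N$ into $M_N(\mathbb C)^{circ}$. Surjectivity follows either from a dimension count — diagonal matrices and circulant matrices both have complex dimension $N$, and the map is injective since $F$ is invertible — or by exhibiting the preimage of a circulant $M$ explicitly as a Fourier transform of its first row and checking $FQF^*=M$ from the same entry formula, using $\frac1N\sum_k w^{kd}=\delta_{d\equiv 0}$. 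The last assertion is then just the entry formula read at $i=0$ (equivalently at $j=0$, the two differing by the reversal $d\mapsto -d$ that relates the first row and first column of a circulant matrix), which returns $Fq/\sqrt N$ up to that reversal.

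For (2), write $U=FQF^*$ by (1); since conjugation by $F$ is a $*$-isomorphism, $UU^*=FQQ^*F^*$, so $U\in U_N$ iff $QQ^*=1$, i.e. $|q_i|^2=1$ for all $i$, i.e. $q\in\mathbb T^N$. For (3), I would use that a real matrix is orthogonal precisely when it is unitary, so $O_N^{circ}=U_N^{circ}\cap M_N(\mathbb R)$, and by (2) it remains to decide when $U=FQF^*$ with $q\in\mathbb T^N$ is real. A circulant matrix is real iff its first row is real, so by (1) the condition is that the Fourier transform $Fq/\sqrt N$ be a real vector. Conjugating its $j$-th coordinate and reindexing $k\mapsto -k$ gives $\overline{(Fq)_j}=\sum_k w^{-jk}\bar q_k=\sum_k w^{jk}\bar q_{-k}$, and comparing with $(Fq)_j=\sum_k w^{jk}q_k$, injectivity of $F$ forces $\bar q_{-k}=q_k$ for every $k$, i.e. $\bar q_i=q_{-i}$ for every $i$; conversely this condition makes $Fq$ real. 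An equivalent route: with $P=F^2$ the flip permutation, $P_{ij}=\delta_{i+j\equiv 0}$, one has $\bar U=\overline{FQF^*}=F^*\bar QF$, so $U$ is real iff $P\bar QP=Q$, and since the diagonal of $P\bar QP$ is $(\bar q_{-i})_i$ this is again $\bar q_i=q_{-i}$.

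There is no genuine obstacle here; the proof is a short chain of routine computations. The only points that need care are bookkeeping of conventions — the sign of the exponent in $F^*$ as opposed to $F$, and whether "first row" or "first column" of the circulant matrix is meant — and, in part (3), invoking injectivity of the discrete Fourier transform so that $\bar q_i=q_{-i}$ comes out equivalent to, and not merely sufficient for, the reality of $FQF^*$.
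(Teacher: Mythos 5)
Your proposal is correct and follows essentially the same route as the paper: a direct entry computation $(FQF^*)_{ij}=\frac{1}{N}\sum_k w^{(i-j)k}q_k$ for (1) and the last assertion, the observation that unitarity forces the eigenvalues onto $\mathbb T$ for (2), and the identity $\overline{Fq}=F\tilde q$ with $\tilde q_i=\bar q_{-i}$ for (3). Your extra care about surjectivity and the first-row versus first-column convention is a welcome refinement of the same argument, not a different one.
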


\begin{proof}
This is well-known, but since we will often use it, here is the proof:

(1) If $H_{ij}=\xi_{j-i}$ is circulant then $Q=F^*HF$ is diagonal, given by:
$$Q_{ij}=\frac{1}{N}\sum_{kl}w^{jl-ik}\xi_{l-k}=\delta_{ij}\sum_rw^{jr}\xi_r$$ 

Also, if $Q=diag(q)$ is diagonal then $H=FQF^*$ is circulant, given by:
$$H_{ij}=\sum_kF_{ik}Q_{kk}\bar{F}_{jk}=\frac{1}{N}\sum_kw^{(i-j)k}q_k$$

Observe that this latter formula proves as well the last assertion,  $\xi=Fq/\sqrt{N}$.

(2) This is clear from (1), because the eigenvalues must be on the unit circle $\mathbb T$.

(3) Observe first that for $q\in\mathbb C^N$ we have $\overline{Fq}=F\tilde{q}$, with $\tilde{q}_i=\bar{q}_{-i}$, and so $\xi=Fq$ is real if and only if $\bar{q}_i=q_{-i}$ for any $i$. Together with (2), this gives the result.
\end{proof}

Observe that in (3), the equations for the parameter space are $q_0=\bar{q}_0$, $\bar{q}_1=q_{n-1}$, $\bar{q}_2=q_{n-2}$, and so on until $[N/2]+1$. Thus, with the convention $\mathbb Z_\infty=\mathbb T$ we have:
$$O_N^{circ}\simeq
\begin{cases}
\mathbb Z_2\times\mathbb Z_\infty^{{(N-1)}/2}&(N\ {\rm odd})\\
\mathbb Z_2^2\times\mathbb Z_\infty^{(N-2)/2}&(N\ {\rm even})
\end{cases}$$

In terms of circulant Hadamard matrices, we have the following statement:

\begin{proposition}
The sets of complex and real circulant Hadamard matrices are:
$$C_N(\infty)^{circ}=\{\sqrt{N}FQF^*|q\in\mathbb T^N\}\cap M_N(\mathbb T)$$
$$C_N(2)^{circ}=\{\sqrt{N}FQF^*|q\in\mathbb T^N,\bar{q}_i=q_{-i}\}\cap M_N(\pm1)$$
In addition, the sets of $q$ parameters are invariant under cyclic permutations, and also under mutiplying by numbers in $\mathbb T$, respectively under multiplying by $-1$. 
\end{proposition}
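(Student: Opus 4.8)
The plan is to deduce the statement from Proposition 3.1 together with the standard reformulation of the Hadamard condition in terms of unitaries. First I recall that a matrix $H$ with $|H_{ij}|=1$ is complex Hadamard if and only if $HH^*=NI_N$, i.e. $H/\sqrt{N}\in U_N$, and similarly a matrix $H\in M_N(\pm 1)$ is Hadamard if and only if $H/\sqrt{N}\in O_N$; in the notation of Definition 1.6 these read $C_N(\infty)=M_N(\mathbb T)\cap\sqrt{N}U_N$ and $C_N(2)=M_N(\pm 1)\cap\sqrt{N}O_N$. A circulant matrix belongs to $\sqrt{N}U_N$ exactly when it belongs to $\sqrt{N}(U_N\cap M_N(\mathbb C)^{circ})=\sqrt{N}U_N^{circ}$, and similarly with $O_N$; hence intersecting the two displayed identities with the set of circulant matrices and inserting parts (2) and (3) of Proposition 3.1 yields precisely the claimed descriptions of $C_N(\infty)^{circ}$ and $C_N(2)^{circ}$.

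For the invariance under cyclic permutations I would work on the circulant side. If $H_{ij}=\xi_{j-i}$, then replacing the rows $H_0,\dots,H_{N-1}$ by $H_1,\dots,H_{N-1},H_0$ gives the matrix with entries $\xi_{j-i-1}$, which is again circulant, with first row $\xi'_k=\xi_{k-1}$. By the last assertion of Proposition 3.1 we have $\xi=Fq/\sqrt{N}$, so the new parameter $q'$ is characterized by $Fq'=SFq$ with $S$ the cyclic shift, and a one-line computation using $F=(w^{ij})/\sqrt{N}$ gives $q'_i=w^{-i}q_i$. Since $|w^{-i}q_i|=|q_i|$, the constraint $q\in\mathbb T^N$ is preserved; and since $\overline{w^{-i}q_i}=w^i\bar q_i=w^iq_{-i}=q'_{-i}$ whenever $\bar q_i=q_{-i}$, the reality constraint is preserved as well. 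Thus both parameter sets are stable under $q\mapsto(w^{-i}q_i)_i$, which is the effect on parameters of a cyclic permutation of rows.

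For the remaining invariance, note that if $H$ is complex Hadamard and $z\in\mathbb T$ then $zH$ still has unimodular entries and satisfies $(zH)(zH)^*=NI_N$, hence is complex Hadamard; writing $H=\sqrt{N}FQF^*$ we get $zH=\sqrt{N}F(zQ)F^*$, so the parameter becomes $zq\in\mathbb T^N$, which proves invariance of the first parameter set under $\mathbb T$. In the real case the only scalars keeping $H$ inside $M_N(\pm 1)$ are $\pm 1$, and for $z=-1$ one has $\overline{-q_i}=-q_{-i}$, so the constraint $\bar q_i=q_{-i}$ survives; this is the ``respectively'' clause.

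I do not expect a genuine obstacle: the proposition is essentially a repackaging of Proposition 3.1 via $C_N(l)=M_N(\mathbb Z_l)\cap\sqrt{N}U_N$. The only place to be careful is the index bookkeeping mod $N$ — matching the cyclic shift $S$ on first-row vectors with the twist $q_i\mapsto w^{-i}q_i$ on eigenvalue vectors (the precise sign and the direction of the shift depending on conventions) — and then double-checking that this twist, like the global rescaling $q\mapsto zq$, is compatible with the symmetry condition $\bar q_i=q_{-i}$ that cuts out $O_N^{circ}$ inside $U_N^{circ}$.
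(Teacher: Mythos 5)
Your proof is correct and follows the same route the paper intends --- the paper's own proof of this proposition is the single line ``clear from Proposition 3.1'', and you have supplied exactly the details that line suppresses: the identification $C_N(l)=M_N(\mathbb Z_l)\cap\sqrt{N}U_N$ intersected with $U_N^{circ}$, resp.\ $O_N^{circ}$, plus the explicit induced actions on $q$. In particular, your reading of ``cyclic permutations'' as the action induced on $q$ by cyclically permuting the rows of $H$, namely $q_i\mapsto w^{-i}q_i$, is the right one: a literal cyclic permutation of the entries of $q$ would instead multiply the first row $\xi$ by a character, which destroys membership in $M_N(\pm 1)$ in the real case and does not even preserve the constraint $\bar q_i=q_{-i}$, whereas your twist is exactly the symmetry $\tilde q_i=w^iq_i$ that the paper uses later in Proposition 4.1.
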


\begin{proof}
All the assertions are indeed clear from Proposition 3.1.
\end{proof}

In the above statement we have used of course the Butson matrix notations $C_N(2)$ and $C_N(\infty)$ for the sets of real and complex Hadamard matrices. In the general Butson matrix case the situation is quite unclear, and we have here the following question:

\begin{problem}
Consider the Butson class $C_N(l)=M_N(\mathbb Z_l)\cap\sqrt{N}U_N$.
\begin{enumerate}
\item Is there a group $U_N(l)\subset U_N$ such that $C_N(l)=M_N(\mathbb T)\cap\sqrt{N}U_N(l)$?

\item Is there a group $\mathbb T^N(l)\subset\mathbb T^N$ such that $C_N(l)^{circ}=M_N(\mathbb T)\cap\sqrt{N}F\mathbb T^N(l)F^*$?
\end{enumerate}
\end{problem}

The answer to these questions is of course clear at $l=2,\infty$. However, at $l=3,4$ already, and especially at $l=4$, the answer to these questions is quite unclear. 

Observe that a positive answer to the first question would imply a positive answer to the second question, because we could simply set $\mathbb T^N(l)=F^*U_N(l)^{circ}F$. 

However, our belief is that, in the general case, the answer to the first question should be rather ``no'', and to the second one, maybe ``yes''. We have no further results here.

Let us go back now to the complex case, where the parameter space is $\mathbb T^N$. We construct now a map $\Phi:\mathbb T^N\to (0,\infty)$, which will play a key role in what follows:

\begin{definition}
Associated to $q\in\mathbb T^N$, written $q=(q_0,\ldots,q_{N-1})$ is the quantity
$$\Phi=\sum_{i+k=j+l}\frac{q_iq_k}{q_jq_l}$$
where all the indices are taken modulo $N$.
\end{definition}

As a first observation, by conjugating the above expression we see that $\Phi$ is real. In fact, $\Phi$ is by definition a sum of $N^3$ terms, consisting of $N(2N-1)$ values of $1$ and of $N(N-1)^2$ other complex numbers of modulus 1, coming in pairs $(a,\bar{a})$. 

We will be back to these observations a bit later. For the moment, let us record the following key statement, which is the starting point for the investigations to follow:

\begin{theorem}
For $q\in\mathbb T^N$ we have $\Phi\geq N^2$, with equality if and only if $\sqrt{N}q$ is the eigenvalue vector of a circulant Hadamard matrix $H\in M_N(\mathbb C)$. 
\end{theorem}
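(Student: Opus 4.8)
The plan is to reinterpret $\Phi$ as (a multiple of) the fourth-power sum of the entries of the circulant unitary $U=FQF^*$, and then apply the elementary Cauchy--Schwarz inequality to the rows of $U$. Since $q\in\mathbb T^N$, Proposition 3.1(2) gives $U=FQF^*\in U_N^{circ}$, with first row $\xi=Fq/\sqrt N$, that is $\xi_r=\frac1N\sum_k w^{rk}q_k$. Because $U$ is circulant we have $|U_{ij}|=|\xi_{j-i}|$, so that $\sum_{ij}|U_{ij}|^4=N\sum_r|\xi_r|^4$, and moreover $\sum_r|\xi_r|^2=\|\xi\|^2=1$ (equivalently, each row of $U$ is a unit vector).

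Next I would compute $\sum_r|\xi_r|^4$ by a direct character-sum manipulation. Writing $|\xi_r|^4=\xi_r\xi_r\,\overline{\xi_r}\,\overline{\xi_r}$ and using $\overline{q_k}=q_k^{-1}$, one expands into a quadruple sum over indices $i,k,j,l$ carrying the factor $w^{r(i+k-j-l)}$; summing over $r$ turns $\frac1N\sum_r w^{r(i+k-j-l)}$ into the indicator of $i+k\equiv j+l\ (\mathrm{mod}\ N)$, and what survives is exactly $\frac1{N^3}\sum_{i+k=j+l}\frac{q_iq_k}{q_jq_l}=\frac{\Phi}{N^3}$. Hence
$$\sum_{ij}|U_{ij}|^4=N\cdot\frac{\Phi}{N^3}=\frac{\Phi}{N^2}.$$

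Now the inequality is pure Cauchy--Schwarz. For any $U\in U_N$ each row satisfies $\sum_j|U_{ij}|^2=1$, so by Cauchy--Schwarz against the all-ones vector, $\sum_j|U_{ij}|^4\ge\frac1N(\sum_j|U_{ij}|^2)^2=\frac1N$, with equality precisely when $|U_{ij}|=1/\sqrt N$ for all $j$. Summing over $i$ gives $\sum_{ij}|U_{ij}|^4\ge 1$, with equality if and only if $|U_{ij}|=1/\sqrt N$ for all $i,j$; since the rows of $\sqrt N U$ are automatically pairwise orthogonal of norm $\sqrt N$, this happens if and only if $H:=\sqrt N U=\sqrt N FQF^*$ is a complex Hadamard matrix. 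Combining with the previous display yields $\Phi=N^2\sum_{ij}|U_{ij}|^4\ge N^2$, with equality exactly when $H=\sqrt N FQF^*$ is (circulant) Hadamard; and since $H=F(\sqrt N Q)F^*$ with $H$ circulant, the vector $\sqrt N q$ is precisely its eigenvalue vector, which is the stated characterization.

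I do not expect a genuine obstacle here: the only care needed is the bookkeeping of the normalization constants in the middle step (those in $F=(w^{ij})/\sqrt N$ and in $\xi=Fq/\sqrt N$), and correctly transporting the equality case through the rescaling $U\mapsto\sqrt N U$ and the circulant diagonalization $H=FDF^*$ into the language of eigenvalue vectors. One could alternatively phrase the whole argument via $\|U\|_4$ as in the Introduction, but the direct row-wise Cauchy--Schwarz computation above is self-contained.
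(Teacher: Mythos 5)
Your proposal is correct and follows essentially the same route as the paper: diagonalize via $U=FQF^*$, use the Fourier/character-sum identity to show $\sum_{ij}|U_{ij}|^4=\Phi/N^2$, and then apply Cauchy--Schwarz to $\sum_{ij}|U_{ij}|^4$ with equality exactly when all entries have modulus $1/\sqrt N$, i.e.\ when $\sqrt N\,U$ is Hadamard. The only cosmetic difference is that you apply Cauchy--Schwarz row by row rather than to the whole matrix at once, which is equivalent; the normalizations all check out.
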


\begin{proof}
For $U\in U_N$ we have the following Cauchy-Schwarz estimate:
$$||U||_4^4=\sum_{ij}|U_{ij}|^4\geq\frac{1}{N^2}\left(\sum_{ij}|U_{ij}|^2\right)^2=1$$

Thus we have $||U||_4\geq 1$, with equality if and only if $H=\sqrt{N}U$ is Hadamard. In the particular case of circulant matrices, by using the formula $\xi=Fq/\sqrt{N}$, we have:
\begin{eqnarray*}
||U||_4^4
&=&N\sum_s|\xi_s|^4\\
&=&\frac{1}{N^3}\sum_s|\sum_iw^{si}q_i|^4\\
&=&\frac{1}{N^3}\sum_s\sum_iw^{si}q_i\sum_jw^{-sj}\bar{q}_j\sum_kw^{sk}q_k\sum_lw^{-sl}\bar{q}_l\\
&=&\frac{1}{N^3}\sum_s\sum_{ijkl}w^{(i-j+k-l)s}\frac{q_iq_k}{q_jq_l}\\
&=&\frac{1}{N^2}\sum_{i+k=j+l}\frac{q_iq_k}{q_jq_l}
\end{eqnarray*}

Thus we have $\Phi=N^2||U||_4^4\geq N^2$, and we are done.
\end{proof}

The above result makes the link with our previous work in \cite{bc1}, \cite{bne}, \cite{bnz} on the $p$-norms over the orthogonal group, and perhaps also with some other analytic considerations, as those in \cite{grz}, \cite{llv}, \cite{ta2}. Of course, the passage through the $4$-norm is optional, and we have the following more direct explanation of the above result:

\begin{proposition}
We have the formula
$$\Phi=N^2+\sum_{i\neq j}(|\nu_i|^2-|\nu_j|^2)^2$$
where $\nu=(\nu_0,\ldots,\nu_{N-1})$ is the vector given by $\nu=Fq$.
\end{proposition}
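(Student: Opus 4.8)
The plan is to reuse the computation already present in the proof of Theorem 3.6 and then reduce everything to an elementary identity about nonnegative reals; as a bonus this makes the inequality $\Phi\ge N^2$ manifest, since a sum of squares is nonnegative. In the proof of Theorem 3.6 one obtains $\Phi=N^2\|U\|_4^4$ with $U=FQF^*$, together with $\|U\|_4^4=N\sum_s|\xi_s|^4$, where $\xi=Fq/\sqrt N$ is the first row of $U$. Since $\nu=Fq$ by definition, we have $\xi_s=\nu_s/\sqrt N$, hence $|\xi_s|^4=|\nu_s|^4/N^2$, and therefore
$$\Phi=N^2\cdot N\cdot\frac{1}{N^2}\sum_s|\nu_s|^4=N\sum_s|\nu_s|^4.$$
Equivalently, and bypassing Theorem 3.6 altogether, one may expand $|\nu_s|^4$ directly from $\nu_s=\frac{1}{\sqrt N}\sum_iw^{si}q_i$ and sum over $s$: the geometric sum $\sum_sw^{s(i-j+k-l)}=N\,\delta_{i+k\equiv j+l}$ collapses the quadruple sum to $\frac1N\sum_{i+k=j+l}\frac{q_iq_k}{q_jq_l}=\frac1N\Phi$.

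Next I would record the normalization: since $F$ is unitary and $|q_i|=1$ for every $i$,
$$\sum_s|\nu_s|^2=\|\nu\|_2^2=\|Fq\|_2^2=\|q\|_2^2=N.$$

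It then remains to combine the two displays via pure algebra. Put $a_s=|\nu_s|^2\ge0$, so that $\Phi=N\sum_sa_s^2$ and $\sum_sa_s=N$. The standard ``variance'' identity
$$\sum_{i<j}(a_i-a_j)^2=(N-1)\sum_sa_s^2-\Big[\big(\textstyle\sum_sa_s\big)^2-\sum_sa_s^2\Big]=N\sum_sa_s^2-N^2$$
then gives $\Phi=N^2+\sum_{i<j}(|\nu_i|^2-|\nu_j|^2)^2$, which is the claimed formula (with the final sum understood over unordered pairs of indices, i.e. over $i<j$).

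I do not expect any genuine obstacle here: the substance is the Cauchy--Schwarz estimate of Theorem 3.6, which is already in hand, and everything else is bookkeeping. The only points requiring a little care are tracking the powers of $\sqrt N$ relating $q$, $\xi$ and $\nu$, and being consistent about whether the last sum runs over ordered or unordered index pairs, which is what fixes the constant in front of it.
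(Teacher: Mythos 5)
Your proof is correct and follows essentially the same route as the paper: both reduce to $\Phi=N\sum_s|\nu_s|^4$ (equivalently $N^3\sum_s|\xi_s|^4$), use $\sum_s|\nu_s|^2=N$ from unitarity of $F$ and $|q_i|=1$, and conclude by the variance identity. Your closing remark is also the right call: the identity genuinely reads $\Phi=N^2+\sum_{i<j}(|\nu_i|^2-|\nu_j|^2)^2$, so the symbol $\sum_{i\neq j}$ in the statement must be understood as a sum over unordered pairs (i.e.\ half of the ordered sum), a normalization the paper's own computation passes over silently.
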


\begin{proof}
This follows by replacing in the above proof the Cauchy-Schwarz estimate by the corresponding sum of squares. More precisely, we know from the above proof that:
$$\Phi=N^3\sum_i|\xi_i|^4$$

On the other hand $U_{ij}=\xi_{j-i}$ being unitary, we have $\sum_i|\xi_i|^2=1$, and so:
\begin{eqnarray*}
1
&=&\sum_i|\xi_i|^4+\sum_{i\neq j}|\xi_i|^2\cdot|\xi_j|^2\\
&=&N\sum_i|\xi_i|^4-\left((N-1)\sum_i|\xi_i|^4-\sum_{i\neq j}|\xi_i|^2\cdot|\xi_j|^2\right)\\
&=&\frac{1}{N^2}\Phi-\sum_{i\neq j}(|\xi_i|^2-|\xi_j|^2)^2
\end{eqnarray*}

Now by multiplying by $N^2$, this gives the formula in the statement.
\end{proof}

We will explore the minimization problem for $\Phi$ in the next sections, by using various methods. As an illustration for the difficulties in dealing with this problem, let us work out the case where $N$ is small. At $N=1$ our inequality $\Phi\geq N^2$ is simply:
$$\Phi=1\geq 1$$

At $N=2$ our inequality is also clearly true:
$$\Phi=6+\left(\frac{q_0}{q_1}\right)^2+\left(\frac{q_1}{q_0}\right)^2\geq 4$$

At $N=3$ now, the inequality is something more subtle:
$$\Phi=15+4Re\left(\frac{q_0^3+q_1^3+q_2^3}{q_0q_1q_2}\right)\geq 9$$

Observe that in terms of $a=q_0^2/(q_1q_2)$, $b=q_1^2/(q_0q_2)$, $c=q_2^2/(q_0q_1)$, which satisfy $|a|=|b|=|c|=1$ and $abc=1$, our function is $\Phi=15+4Re(a+b+c)$. Thus at $N=3$ our inequality still has a quite tractable form, namely:
$$|a|=|b|=|c|=1,abc=1\implies Re(a+b+c)\geq-\frac{3}{2}$$

At $N=4$ however, the formula of $\Phi$ is as follows:
\begin{eqnarray*}\Phi
&=&28+4\left(\frac{q_0q_1}{q_2q_3}+\frac{q_2q_3}{q_0q_1}+\frac{q_0q_3}{q_1q_2}+\frac{q_1q_2}{q_0q_3}\right)+\left(\frac{q_0^2}{q_2^2}+\frac{q_2^2}{q_0^2}+\frac{q_1^2}{q_3^2}+\frac{q_3^2}{q_1^2}\right)\\
&&+2\left(\frac{q_0q_2}{q_1^2}+\frac{q_1^2}{q_0q_2}+\frac{q_0q_2}{q_3^2}+\frac{q_3^2}{q_0q_2}+\frac{q_1q_3}{q_0^2}+\frac{q_0^2}{q_1q_3}+\frac{q_1q_3}{q_2^2}+\frac{q_2^2}{q_1q_3}\right)
\end{eqnarray*}

It is not clear how to obtain a simple direct proof of $\Phi\geq 16$.

\section{The minimization problem}

Let us discuss now the real case. Here the parameter space is $\{q\in\mathbb T^N|\bar{q}_i=q_{-i}\}$, and our first task is to exploit the extra symmetries of the problem:

\begin{proposition}
For $q\in\mathbb T^N$ satisfying $\bar{q}_i=q_{-i}$ we have:
$$\Phi=\sum_{i+j+k+l=0}q_iq_jq_kq_l$$
In addition, we have $\Phi(q)=\Phi(-q)=\Phi(\tilde{q})=\Phi(-\tilde{q})$, where $\tilde{q}_i=w^iq_i$.
\end{proposition}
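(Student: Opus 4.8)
The first formula is a direct substitution. The plan is to start from the definition $\Phi=\sum_{i+k=j+l}\frac{q_iq_k}{q_jq_l}$ and use the real-circulant constraint $\bar q_i=q_{-i}$, which (since $|q_j|=1$) lets us write $1/q_j=\bar q_j=q_{-j}$ and likewise $1/q_l=q_{-l}$. So each term $\frac{q_iq_k}{q_jq_l}$ becomes $q_iq_kq_{-j}q_{-l}$. Reindexing with $j'=-j$, $l'=-l$, the summation condition $i+k=j+l$ turns into $i+k+j'+l'=0$, and we land on $\Phi=\sum_{i+j+k+l=0}q_iq_jq_kq_l$ exactly as stated. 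I should note in passing that this also makes the reality of $\Phi$ transparent: the set $\{i+j+k+l\equiv 0\}$ is stable under $(i,j,k,l)\mapsto(-i,-j,-k,-l)$, and on real-circulant $q$ this map conjugates the summand.

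For the invariance $\Phi(q)=\Phi(-q)$: replacing $q$ by $-q$ multiplies each degree-$4$ monomial $q_iq_jq_kq_l$ by $(-1)^4=1$, so every term is unchanged; this is immediate from the new formula. (One should also remark that $-q$ still satisfies the reality constraint $\overline{(-q_i)}=-q_{-i}$, so it lies in the same parameter space.)

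For $\Phi(\tilde q)=\Phi(q)$ with $\tilde q_i=w^iq_i$: substituting into $\sum_{i+j+k+l=0}q_iq_jq_kq_l$ multiplies the $(i,j,k,l)$ term by $w^{i+j+k+l}$, and since the sum ranges precisely over $i+j+k+l\equiv 0\pmod N$ we have $w^{i+j+k+l}=1$ on every term; hence $\Phi(\tilde q)=\Phi(q)$. Here too one checks $\tilde q$ stays in the parameter space: $\overline{\tilde q_i}=\overline{w^i}\,\bar q_i=w^{-i}q_{-i}=\tilde q_{-i}$. Combining the two symmetries gives $\Phi(-\tilde q)=\Phi(\tilde q)=\Phi(q)$ as well, completing the chain of equalities.

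I do not expect a genuine obstacle here — the whole proposition is bookkeeping with the constraint $\bar q_i=q_{-i}$ and with sums over $i+j+k+l\equiv 0$. The only point requiring a little care is the reindexing $j\mapsto -j$, $l\mapsto -l$ in the first step: one must confirm it is a bijection on $\mathbb Z_N$ (it is) and that it sends the diagonal-type constraint $i+k=j+l$ to $i+j+k+l=0$ cleanly, with all indices read modulo $N$ throughout, so that no stray sign or shift survives.
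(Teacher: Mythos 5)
Your argument is correct and follows essentially the same route as the paper's (which simply writes $\Phi=\sum_{i+k=j+l}q_iq_k\bar q_j\bar q_l=\sum_{i+k=j+l}q_iq_kq_{-j}q_{-l}$ and declares the symmetries clear from the resulting formula); you merely make explicit the reindexing $j\mapsto -j$, $l\mapsto -l$ and the verification that $-q$ and $\tilde q$ remain in the parameter space, both of which are fine. No issues.
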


\begin{proof}
The first assertion is clear from definitions, because we have:
$$\Phi=\sum_{i+k=j+l}\frac{q_iq_k}{q_jq_l}=\sum_{i+k=j+l}q_iq_k\bar{q}_j\bar{q}_l=\sum_{i+k=j+l}q_iq_kq_{-j}q_{-l}$$

As for the second assertion, this is clear from the first one.
\end{proof}

The minimization problematics is very related to the CHC, and we have:

\begin{problem}
Assume that $q\in\mathbb T^N$ satisfies $\bar{q}_i=q_{-i}$.
\begin{enumerate}
\item For $N>4$, is it true that we have $\Phi>N^2$?

\item What is the $N\to\infty$ behavior of $\min\Phi-N^2$?
\end{enumerate}
\end{problem}

Let us assume now that $N=2m$ is even, and try to find the minimum of $\Phi$. 

The first observation is that we have $q_0,q_m\in\{\pm 1\}$, and that by using the symmetries in Proposition 4.1, we can always assume that we have $q_0=q_m=1$:
$$q=(1,q_1,\ldots,q_{m-1},1,\bar{q}_{m-1},\ldots,\bar{q}_1)$$

In the case particular case $N=4n$, which is of course the one we are interested in, it is convenient to write our vector in the following special way:

\begin{proposition}
In the case $N=4n$, the minimum of $\Phi$ stays the same when restricting attention to the vectors of the form
$$q=(1,a_1,\ldots,a_{n-1},b,c_{n-1},\ldots,c_1,1,\bar{c}_1,\ldots,\bar{c}_{n-1},\bar{b},\bar{a}_{n-1},\ldots,\bar{a}_1)$$
with $a,c\in\mathbb T^{n-1}$ and $b\in\mathbb T$, and with no constraints between $a,b,c$.
\end{proposition}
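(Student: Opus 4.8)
The plan is simple, because the statement carries essentially no analytic content beyond Proposition 4.1: it amounts to normalizing the vector and then relabelling coordinates. So the whole argument is a bookkeeping one, and the only thing that needs care is the index arithmetic modulo $N=4n$.

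First I would invoke Proposition 4.1, which gives that $\Phi$ is invariant under $q\mapsto-q$, $q\mapsto\tilde q$ and $q\mapsto-\tilde q$, with $\tilde q_i=w^iq_i$; all three visibly preserve the real parameter space $\{q\in\mathbb T^N:\bar q_i=q_{-i}\}$. Since $N=4n$ gives $w^{2n}=e^{\pi{\rm i}}=-1$, we have $\tilde q_0=q_0$ and $\tilde q_{2n}=-q_{2n}$, so the four maps $q\mapsto\pm q$ and $q\mapsto\pm\tilde q$ act transitively on the possible values of the pair $(q_0,q_{2n})\in\{\pm1\}^2$. As $\Phi$ is constant on the orbits of this action, its minimum over the full real parameter space equals its minimum over the subset $\{q:q_0=q_{2n}=1\}$, and on that subset the constraint $\bar q_i=q_{-i}$ forces
$$q=(1,q_1,\ldots,q_{2n-1},1,\bar q_{2n-1},\ldots,\bar q_1),$$
with $q_1,\ldots,q_{2n-1}\in\mathbb T$ otherwise unconstrained.

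It then remains to rename coordinates. Setting $a_i=q_i$ for $1\le i\le n-1$, $b=q_n$, and $c_i=q_{2n-i}$ for $1\le i\le n-1$, one checks $q_{n+j}=q_{2n-(n-j)}=c_{n-j}$ for $1\le j\le n-1$, so the block in positions $n+1,\ldots,2n-1$ reads $(c_{n-1},\ldots,c_1)$; and the second half of $q$ is the reversed conjugate of the first half, which produces the blocks $\bar c_1,\ldots,\bar c_{n-1},\bar b,\bar a_{n-1},\ldots,\bar a_1$ in positions $2n+1,\ldots,4n-1$. This is exactly the form in the statement, and since $a,b,c$ are merely a relabelling of unconstrained unit-modulus numbers, there are no relations among them. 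The only genuine ``obstacle'' is making sure the conjugated entries land in the correct positions modulo $N=4n$ in this identification; once that is verified, the proposition follows.
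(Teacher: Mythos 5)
Your argument is correct and follows exactly the route the paper takes: the paper's proof is the one-line remark that this follows from the $N=2m$ discussion and the symmetries $\Phi(q)=\Phi(-q)=\Phi(\tilde q)=\Phi(-\tilde q)$ of Proposition 4.1, which is precisely what you use (the four maps acting transitively on the sign pattern of $(q_0,q_{2n})$, then relabelling). You have simply made explicit the index bookkeeping that the paper leaves to the reader, and it checks out.
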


\begin{proof}
This is clear from the above $N=2m$ discussion, based on Proposition 4.1.
\end{proof}

The point with this writing is that the variables $b=q_n$, $\bar{b}=q_{3n}$ play a special role. Indeed, it is clear that $b^4,\bar{b}^4$ will appear with multiplicity $1$, and that $b^3,\bar{b}^3$ will not appear at all. So, our quantity $\Phi$ must have a decomposition of the following type:
$$\Phi=P_0+P_1(b+\bar{b})+P_2(b^2+\bar{b}^2)+(b^4+\bar{b}^4)$$

In other words, with $b=e^{{\rm i}\beta}$, we must have:
$$\Phi=P_0+2P_1\cos\beta+2P_2\cos 2\beta+2\cos 4\beta$$

This observation suggests a two-step approach to the minimization problem. However, since the answer to Problem 4.2 (2) is not known to us for the moment, we don't have so far any clear strategy. Of course, if the quantity $\min\Phi-N^2$ would stay $\geq 4$, then we could simply erase the $2\cos 4\beta$ term, and then easily minimize with respect to $\beta$.

By using a computer, we have found the following results:

\begin{proposition}
Let $q=(1,a_1,\ldots,a_{n-1},b,c_{n-1},\ldots,c_1,1,\bar{c}_1,\ldots,\bar{c}_{n-1},\bar{b},\bar{a}_{n-1},\ldots,\bar{a}_1)$, and denote by $\alpha_i,\beta,\gamma_i$ the arguments of the numbers $a_i,b,c_i$.
\begin{enumerate}
\item At $N=4$ we have $\min\Phi=16$, attained at $\beta=\pm\pi/2$.

\item At $N=8$ we have $\min\Phi=256/3=85.33..$, attained for instance at $\beta=\pi$, and $\alpha=\gamma=-\arccos(1/\sqrt{3})$.

\item At $N=12$ we have $\min\Phi=162$, attained for instance at $\beta=-\pi/4$, and $\alpha_1=\gamma_1=\pi/4$, $\alpha_2=\gamma_2=-2\pi/3$.
\end{enumerate}
\end{proposition}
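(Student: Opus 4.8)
\medskip

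\noindent\textbf{Proof proposal.} I would dispose of $N=4$ first, by a direct computation. By Proposition 4.3 with $n=1$ there are no parameters $a,c$ and the vector is simply $q=(1,b,1,\bar b)$ with $b=e^{{\rm i}\beta}$; substituting this into the $N=4$ expansion of $\Phi$ recorded at the end of Section 3 and collecting powers of $b$ gives $\Phi=38+24\cos2\beta+2\cos4\beta=4(3+\cos2\beta)^2$. Since $\cos2\beta\in[-1,1]$ and $-1>-3$, this is minimized exactly at $\cos2\beta=-1$, i.e.\ $\beta=\pm\pi/2$, with value $4\cdot2^2=16$. (Equivalently, $\Phi\geq16$ is the content of Theorem 3.5, and one checks that $\beta=\pm\pi/2$ produces the eigenvalue vector of $K_4$, up to the symmetries of Proposition 4.1.)

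For $N=8$ and $N=12$ there is no circulant Hadamard matrix, so Theorem 3.5 only yields $\Phi>N^2$ and a genuine optimization is unavoidable. Via Proposition 4.3, $\Phi$ is a trigonometric polynomial in the $2n-1$ angles $\alpha_1,\dots,\alpha_{n-1},\beta,\gamma_1,\dots,\gamma_{n-1}$ — three angles for $N=8$, five for $N=12$ — on the compact torus $\mathbb T^{2n-1}$. The plan has two halves. The upper bound is routine: one substitutes the candidate points of the statement (for $N=8$, the single point $\beta=\pi$, $\alpha_1=\gamma_1=-\arccos(1/\sqrt3)$; for $N=12$, $\beta=-\pi/4$, $\alpha_1=\gamma_1=\pi/4$, $\alpha_2=\gamma_2=-2\pi/3$) into $\Phi$ and checks by arithmetic that the values are $256/3$ and $162$. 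The matching lower bound is the crux. Since $\Phi$ is smooth on the compact torus it attains its minimum at a critical point, so it suffices to enumerate the critical points: after the substitution $z_j=e^{{\rm i}\theta_j}$ the equations $\partial\Phi/\partial\theta_j=0$ become a finite system of Laurent-polynomial equations in the $z_j$, which one solves (by Gr\"obner bases or resultants — this is the critical-point viewpoint developed in Section 5), evaluates $\Phi$ on, and then takes the least value. For $N=8$ this is a three-variable polynomial system and is entirely tractable; for $N=12$ it is a substantially heavier five-variable computation. An alternative route to the lower bound is to use the autocorrelation form of Proposition 3.6, namely $\Phi=N^2+2\sum_{m\neq0}|r_m|^2$ with $r_m=\sum_i q_iq_{-i-m}$, which places the problem inside the well-studied ``low periodic autocorrelation / merit factor'' circle of ideas.

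The main obstacle is certifying the lower bound rigorously, especially at $N=12$: triangularizing the critical-point system and separating the genuinely real solutions from the many complex ones is exactly where a computer is needed, and if one insists on a fully rigorous (rather than merely numerical) statement one should either carry out the elimination symbolically or use verified, interval-arithmetic global optimization. A natural simplification worth exploiting is a symmetry of the parametrization: composing the cyclic shift by $N/2$ with complex conjugation — each of which leaves $\Phi$ invariant — acts on the data of Proposition 4.3 by $(a,b,c)\mapsto(c,b,a)$, so the set of minimizers is invariant under $a\leftrightarrow c$; if one can show, or in the spirit of the proposition simply verify, that a minimizer lies on the fixed locus $a=c$, the number of variables drops to $n$, leaving a two-angle minimization for $N=8$ that is plausibly doable by hand and a three-angle one for $N=12$. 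Since the proposition only claims the minimal value and exhibits one family attaining it, it is not necessary to classify all critical points: it suffices to combine the explicit evaluation at the stated points with the fact — supplied by the computer search — that no critical value of $\Phi$ is smaller.
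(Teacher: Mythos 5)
Your proposal is correct and follows essentially the same route as the paper: Proposition 4.4 is proved there by expanding $\Phi$ as a trigonometric polynomial in the angles of Proposition 4.3 and minimizing with Mathematica, and your $N=4$ formula $\Phi=4(3+\cos 2\beta)^2$ is exactly the one the paper records. You are in fact more explicit than the paper about how the lower bounds at $N=8,12$ should be certified (the paper simply reports the computer output); the only slip is that the ``autocorrelation form'' you attribute to Proposition 3.6 is not what that proposition states (it gives $\Phi=N^2+\sum_{i\neq j}(|\nu_i|^2-|\nu_j|^2)^2$ with $\nu=Fq$), though your identity is a correct alternative in the real case up to the bookkeeping of the factor $2$.
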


\begin{proof}
These results were obtained using Mathematica, the idea being as follows:

(1) Here the formula of $\Phi$ is as follows, which gives the result:
$$\Phi=4(3+\cos(2 \beta))^2$$

(2) Here the formula of $\Phi$ is as follows, which once again gives the result:
\begin{eqnarray*}
\Phi
&=&170+2\cos(4\beta)+12\cos(2\alpha+2\gamma)\\
&+&8\cos(\alpha-3\gamma)+8\cos(3\alpha-\gamma)\\
&+&48\cos(\alpha+\gamma)+48\cos(\alpha-\beta-\gamma)+48\cos(\alpha+\beta-\gamma)\\
&+&24\cos(2 \beta)+24\cos(\alpha-2\beta+\gamma)+24\cos(\alpha+2\beta+\gamma)\\
&+&24\cos(2\alpha-\beta)+24\cos(2\alpha+\beta)+24\cos(\beta-2\gamma)+24\cos(\beta+2 \gamma)
\end{eqnarray*}

(3) Here the formula of $\Phi$ is quite long, and won't be given here.
\end{proof}

The above computations suggest the following statement:

\begin{conjecture}
In the case $N=4n$, the minimum of $\Phi$ over $\{q\in\mathbb T^N|\bar{q}_i=q_{-i}\}$ is the same as the minimum of $\Phi$ over vectors as above, satisfying $a=c$.
\end{conjecture}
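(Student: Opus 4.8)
The plan is to reduce the conjecture to exploiting one further $\mathbb Z_2$-symmetry of $\Phi$. The key point is that, on the real locus, where by Proposition 4.1 we may write $\Phi=\sum_{i+j+k+l=0}q_iq_jq_kq_l$ (sum over $\mathbb Z_N$, $N=4n$), the quantity $\Phi$ is invariant under the index reflection $R\colon q_m\mapsto q_{2n-m}$: substituting $q_m\mapsto q_{2n-m}$ and reindexing $i\mapsto 2n-i$ and similarly for $j,k,l$ replaces the defining condition $i+j+k+l\equiv 0$ by $8n-(i+j+k+l)\equiv 0$, and this is the same condition since $8n=2N\equiv 0$, so the whole sum is unchanged. (Equivalently, $R$ is the composition of the cyclic shift by $2n$ with complex conjugation, both of which preserve $\Phi$ and the real locus.) One then checks the routine facts that $R$ preserves the real condition $\bar q_i=q_{-i}$ and the normalization $q_0=q_{2n}=1$, that it fixes the distinguished coordinate $b=q_n$, and that in the standard form of Proposition 4.3 it acts exactly as $a\leftrightarrow c$, since $q_j=a_j$ and $q_{2n-j}=c_j$ for $1\le j\le n-1$. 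Its fixed locus is therefore precisely $\{a=c\}$, and the conjecture is equivalent to the statement that $\Phi$ attains its global minimum at some point of $\{a=c\}$.

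The second step would be to push a global minimizer onto $\mathrm{Fix}(R)=\{a=c\}$. Let $q^*$ be a minimizer; write $\alpha_j,\gamma_j$ for the arguments of the paired coordinates $q_j,q_{2n-j}$ and put $\alpha_j=u_j+v_j$, $\gamma_j=u_j-v_j$, so that $\mathrm{Fix}(R)=\{v=0\}$ and $R$ negates $v$ while fixing $u$ and $\beta=\arg b$. Along the path $q_s$ obtained from $q^*$ by replacing $v$ by $sv$ for $s\in[-1,1]$, one has $q_1=q^*$, $q_{-1}=Rq^*$, both minimizers, $q_0\in\mathrm{Fix}(R)$, and $\Phi(q_s)=\Phi(q_{-s})$; so it would suffice to show $\Phi(q_0)\le\Phi(q^*)$, which, as $\Phi(q^*)=\min\Phi$, amounts to saying that $v=0$ minimizes $s\mapsto\Phi(q_s)$. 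Two structural facts are favourable here: the symmetry $\Phi\circ R=\Phi$ already forces $v=0$ to be a critical point of every slice of this type, and since $dR$ acts as $-1$ on the directions normal to $\mathrm{Fix}(R)$, any critical point of $\Phi|_{\mathrm{Fix}(R)}$ is automatically a critical point of the full $\Phi$; hence minimizing $\Phi$ over $\{a=c\}$ already produces genuine critical points of $\Phi$. One is thus reduced to a statement about the behaviour of $\Phi$ in the directions transversal to $\{a=c\}$.

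That transversal statement is the main obstacle, and it cannot be proved pointwise: at a general configuration $v=0$ need not minimize the slice --- for instance at $q=(1,\dots,1)$ a direct computation from the $N=8$ formula of Proposition 4.4 shows $v=0$ to be a local \emph{maximum} in the $v$-direction --- so neither a naive ``symmetrize and decrease'' argument nor the multiplicative symmetrization $q\mapsto\sqrt{q\cdot Rq}$ lowers $\Phi$ in general. What seems genuinely needed is global information: either a classification of the critical points of $\Phi$, in the spirit of the machinery of \cite{bc1}, \cite{bne}, \cite{bnz} taken up in the next section, showing that the least critical value is attained on $\{a=c\}$; or a direct argument that at every global minimizer the Hessian of $\Phi$ restricted to the directions transversal to $\{a=c\}$ is positive semidefinite, i.e.\ that no minimizer is a transversal saddle. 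Proving this for all $n$, rather than verifying it on a computer as in Proposition 4.4 for $n\le 3$, is the essential difficulty, and is why the statement is left as a conjecture.
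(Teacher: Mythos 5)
This statement is left as a conjecture in the paper: the only evidence offered is the computer verification at $N=8,12$ recorded in Proposition 4.4, and no proof is attempted there. Your proposal does not close that gap either --- and you say so explicitly --- but the structural part of what you write is correct and goes beyond what the paper records. The verification that the index reflection $R\colon q_m\mapsto q_{2n-m}$ preserves $\Phi$ (indeed on all of $\mathbb T^N$, since the condition $i+k=j+l$ is stable under $i\mapsto 2n-i$, etc.), preserves the real locus $\bar q_i=q_{-i}$ and the normalization $q_0=q_{2n}=1$, fixes $b=q_n$, and acts on the standard form of Proposition 4.3 exactly as $a\leftrightarrow c$, is all correct; so is the identification of $R$ as (cyclic shift by $2n$) composed with conjugation, and the observation, via symmetric criticality, that minimizers of $\Phi$ restricted to $\{a=c\}$ are automatically critical points of the full $\Phi$. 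This correctly recasts the conjecture as: the global minimum of $\Phi$ on the real locus is attained on the fixed locus of the involution $R$.

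The gap is exactly where you locate it: nothing forces a $\mathbb Z_2$-invariant function to attain its minimum on the fixed locus of the involution, and your counterexample to any pointwise symmetrization is sound --- at $q=(1,\dots,1)$, setting $\alpha=v$, $\gamma=-v$, $\beta=0$ in the $N=8$ formula of Proposition 4.4 gives $\Phi=304+192\cos 2v+16\cos 4v$, which indeed has a local maximum at $v=0$. So what you have is a correct reduction plus an honest statement of the remaining open problem, not a proof; in that respect your text is consistent with, and somewhat more informative than, the paper, which supports the conjecture only by the numerical data at $N=8,12$. If you pursue this, the critical-point machinery of Section 5 (in particular Conjecture 5.4 and the classification question in Problem 5.3) is the natural place to look for the missing global input.
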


More precisely, the fact we can assume $q_0=q_{2n}=1$ in the minimization problem is elementary, and was already explained above. The interesting phenomenon, checked at $N=8,12$, is that we can always assume that the vectors $a,c$ are equal.

We should mention that, even when assuming $a=c$, the minimization problem looks quite difficult. For instance at $N=8$, in terms of $x=a^2+\bar{a}^2$ and $y=b+\bar{b}$ we have:
$$\Phi=6x^2+4x(3y^2+6y+2)+(y^4+8y^2+48y+136)$$

This formula can be written in the following way:
$$3\Phi=2(3x+3y^2+6y+2)^2+(-15y^4-72y^3-72y^2+96y+400)$$

What is quite interesting here is that the minimum over $[-2,2]^2$ is attained at $x=-2/3,y=-2$, which are values making vanish the square in the above expression:
$$3x+3y^2+6y+2=-2+12-12+2=0$$

We do not have any explanation, or higher dimensional analogue, for this fact:

\begin{problem}
In the $a=c$ case, what are the further symmetries of the problem? Is it true that certain squares that appear naturally in the formula of $\Phi$ must vanish?
\end{problem}

Summarizing, the minimization problem for $\Phi$ is so far concerned with the construction of the good parameter space for the problem, and we have here two experimental findings so far, obtained at $N=8,12$ and $N=8$ respectively, and waiting to be understood.

\section{Critical points}

One possible way to find the minimum of $\Phi$ is by identifying its critical points, and then checking its values on these critical points to find the minimum. Also, the computation of the critical points, or at least of their symmetries, could turn to be key for some other related problems, e.g. for establishing Conjecture 4.5 above, or for finding a better parameter space for the integration problems discussed in section 6 below.

Let us first give a criterion for a vector $q\in\mathbb T^N$ to be a critical point of $\Phi$:

\begin{theorem}
Write $\Phi=\Phi_0+\ldots+\Phi_{N-1}$, with each $\Phi_i$ being given by the same formula as $\Phi$, namely $\Phi=\sum_{i+k=j+l}\frac{q_iq_k}{q_jq_k}$, but keeping the index $i$ fixed. Then:
\begin{enumerate}
\item We have $\Phi\in\mathbb R$, and $\Phi\geq N^2$, with equality in the Hadamard case.

\item The critical points of $\Phi$ are those where $\Phi_i\in\mathbb R$, for any $i$.

\item In the Hadamard case we have $\Phi_i=N$, for any $i$.
\end{enumerate}
\end{theorem}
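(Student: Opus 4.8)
The plan is to treat the three items separately. Item (1) requires nothing new: the reality of $\Phi$ was already observed right after Definition 3.4, and the bound $\Phi\geq N^2$ together with the characterization of the equality case is precisely Theorem 3.5. So I would dispose of (1) in a line.

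For item (3) I would rerun the Fourier computation from the proof of Theorem 3.5, but keeping the summation index $i$ fixed. Writing $\nu=Fq$ and substituting $\frac1N\sum_sw^{(i+k-j-l)s}$ for the indicator of $i+k\equiv j+l$ in $\Phi_i=\sum_{k,j,l:\,i+k=j+l}\frac{q_iq_k}{q_jq_l}$, the three free sums over $j,k,l$ factor, exactly as in the proof of Theorem 3.5, into $\sqrt N\,\bar\nu_s$, $\sqrt N\,\nu_s$, $\sqrt N\,\bar\nu_s$, giving
$$\Phi_i=q_i\sqrt N\sum_sw^{is}\,|\nu_s|^2\,\bar\nu_s .$$
In the Hadamard case $\Phi=N^2$, so by Proposition 3.6 all the $|\nu_s|^2$ are equal, and since $\sum_s|\nu_s|^2=\|Fq\|^2=\|q\|^2=N$ we get $|\nu_s|^2=1$ for every $s$. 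Thus the factor $|\nu_s|^2$ drops out, and what remains is an inverse Fourier transform, $\sum_sw^{is}\bar\nu_s=\overline{\sum_sw^{-is}\nu_s}=\sqrt N\,\bar q_i$, so $\Phi_i=N|q_i|^2=N$. As a consistency check, $\sum_i\Phi_i=\Phi$, and indeed $\sum_iN=N^2$, matching Theorem 3.5.

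For item (2) I would parametrize $q_m=e^{{\rm i}\theta_m}$, so that each term of $\Phi$ is $\exp({\rm i}(\theta_i+\theta_k-\theta_j-\theta_l))$, and compute $\partial\Phi/\partial\theta_m$: differentiating term by term, each quadruple $(i,j,k,l)$ contributes ${\rm i}$ times the net number of times $m$ occurs among $i,k$ minus among $j,l$, times the term itself. The constraint set $\{i+k\equiv j+l\}$ is invariant under $i\leftrightarrow k$, under $j\leftrightarrow l$, and under swapping the numerator pair $(i,k)$ with the denominator pair $(j,l)$; the summand $\frac{q_iq_k}{q_jq_l}$ is invariant under the first two and gets conjugated by the third. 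Using the first two symmetries, $\partial\Phi/\partial\theta_m$ collapses to $2{\rm i}(\Phi_m-\Psi_m)$, where $\Phi_m$ is the ``numerator index $=m$'' sum of the statement and $\Psi_m$ the ``first denominator index $=m$'' sum; a relabelling of the summation variables (which is exactly the third symmetry) identifies $\Psi_m$ with $\overline{\Phi_m}$. Hence $\partial\Phi/\partial\theta_m=2{\rm i}(\Phi_m-\overline{\Phi_m})=-4\,{\rm Im}\,\Phi_m$, and $q$ is a critical point exactly when ${\rm Im}\,\Phi_m=0$ for every $m$, i.e. $\Phi_m\in\mathbb R$ for all $m$.

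The only step that demands genuine care is the bookkeeping in (2): correctly combining the four Kronecker-delta contributions to the $\theta_m$-derivative, and carrying out the change of summation variables that shows $\Psi_m=\overline{\Phi_m}$ — that is, recognizing that conjugating $\Phi$ amounts to interchanging the numerator and denominator index pairs, an operation that preserves the constraint $i+k\equiv j+l$. Everything else is routine: item (3) is just the Theorem 3.5 computation with one index frozen, and item (1) is a recollection.
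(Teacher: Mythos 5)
Your proposal is correct and follows essentially the same route as the paper: item (1) is quoted from Theorem 3.5, item (2) computes $\partial\Phi/\partial\theta_m=\mp4\,\mathrm{Im}\,\Phi_m$ by exploiting the $i\leftrightarrow k$, $j\leftrightarrow l$ and numerator/denominator symmetries (the paper isolates the terms $K_i+\bar K_i$ containing $q_i$, which is the same bookkeeping), and item (3) is exactly the paper's Fourier computation with one index frozen, written with $\nu=Fq$ instead of $\xi=Fq/\sqrt N$. All steps check out, including the identification $\Psi_m=\overline{\Phi_m}$ and the evaluation $\sum_sw^{is}\bar\nu_s=\sqrt N\,\bar q_i$.
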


\begin{proof}
(1) This assertion was already proved above.

(2) The first observation is that the non-constant terms in the definition of $\Phi$ involving the variable $q_i$ are the terms of the sum $K_i+\bar{K}_i$, where:
$$K_i=\sum_{2i=j+l}\frac{q_i^2}{q_jq_l}+2\sum_{k\neq i}\sum_{i+k=j+l}\frac{q_iq_k}{q_jq_l}$$

Thus if we fix $i$ and we write $q_i=e^{{\rm i}\alpha_i}$, we obtain:
$$\frac{\partial\Phi}{\partial\alpha_i} 
=4Re\left(\sum_k\sum_{i+k=j+l}{\rm i}\cdot\frac{q_iq_k}{q_jq_l}\right)
=4Im\left(\sum_{i+k=j+l}\frac{q_iq_k}{q_jq_l}\right)
=4Im(\Phi_i)$$

Now since the derivative must vanish for any $i$, this gives the result.

(3) We first perform the end of the Fourier computation in the proof of Theorem 3.5 above backwards, by keeping the index $i$ fixed. We obtain: 
\begin{eqnarray*}
\Phi_i
&=&\sum_{i+k=j+l}\frac{q_iq_k}{q_jq_l}\\
&=&\frac{1}{N}\sum_s\sum_{ijkl}w^{(i-j+k-l)s}\frac{q_iq_k}{q_jq_l}\\
&=&\frac{1}{N}\sum_sw^{si}q_i\sum_jw^{-sj}\bar{q}_j\sum_kw^{sk}q_k\sum_lw^{-sl}\bar{q}_l\\
&=&N^2\sum_sw^{si}q_i\bar{\xi}_s\xi_s\bar{\xi}_s
\end{eqnarray*}

Here we have used the formula $\xi=Fq/\sqrt{N}$. Now by assuming that we are in the Hadamard case, we have $|\xi_s|=1/\sqrt{N}$ for any $s$, and so we obtain:
$$\Phi_i=N\sum_s w^{si}q_i\bar{\xi}_s=N\sqrt{N}q_i\overline{(F^*\xi)}_i=Nq_i\bar{q}_i=N$$

This ends the proof.
\end{proof}

Let us look now at the case where $N$ is small. We denote as usual by $\mathbb Z_l$ the cyclic group formed by the $l$-th roots of unity in the complex plane. Let $w=e^{2\pi {\rm i}/3}$.

\begin{proposition}
The critical points of $\Phi$, with $q_0=1$, are as follows:
\begin{enumerate}
\item $N=2$. Here the condition is $q_1\in\mathbb Z_4$.

\item $N=3$. Here the solutions $(q_1,q_2)$ form the following $18$-element subset of $\mathbb Z_6\times\mathbb Z_6$:
$$(1,1),(w,w^2),(w^2,w),(1,w^2),(w,w),(w^2,1)$$
$$(1,w),(w,1),(w^2,w^2),(-1,-1),(-w,-w^2),(-w^2,-w)$$
$$(-1,1),(-w,w^2),(-w^2,w),(1,-1),(w,-w^2),(w^2,-w)$$
\end{enumerate}
\end{proposition}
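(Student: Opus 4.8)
The plan is, in each case, to pass to phase variables and locate the zeros of the partial derivatives of $\Phi$, using the explicit expressions for $\Phi$ recorded in Section 3. Since $\Phi$ is invariant under the global rescaling $q\mapsto\lambda q$, fixing $q_0=1$ selects exactly one point from each $\mathbb T$-orbit of critical points (the gradient of $\Phi$ on $\mathbb T^N$ is tangent to the slice $\{q_0=1\}$, which is transverse to the orbits), so it suffices to find the critical points of the function of the remaining phases $\alpha_1,\dots,\alpha_{N-1}$. At $N=2$ this is immediate: with $q_0=1$ and $q_1=e^{{\rm i}\alpha_1}$ the Section 3 formula reads $\Phi=6+2\cos 2\alpha_1$, so $\partial\Phi/\partial\alpha_1=-4\sin 2\alpha_1$, which vanishes exactly when $2\alpha_1\in\pi\mathbb Z$, i.e. $q_1\in\{1,{\rm i},-1,-{\rm i}\}=\mathbb Z_4$, giving (1).

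At $N=3$ the cleanest route is through the auxiliary variables $a=q_0^2/(q_1q_2)$, $b=q_1^2/(q_0q_2)$, $c=q_2^2/(q_0q_1)$ from Section 3, which satisfy $|a|=|b|=|c|=1$, $abc=1$, and $\Phi=15+4{\rm Re}(a+b+c)$. With $q_0=1$ the assignment $(q_1,q_2)\mapsto(a,b)=(q_1^{-1}q_2^{-1},\,q_1^2q_2^{-1})$ is a homomorphism $\mathbb T^2\to\mathbb T^2$ given by the integer matrix $\left(\begin{smallmatrix}-1&-1\\2&-1\end{smallmatrix}\right)$, of determinant $3$; it is therefore a $3$-sheeted covering, hence a local diffeomorphism, so the critical points of $\Phi$ in the variables $(q_1,q_2)$ are precisely the preimages of the critical points of $(a,b)\mapsto {\rm Re}\bigl(a+b+(ab)^{-1}\bigr)$ on $\mathbb T^2$. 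Writing $a=e^{{\rm i}s}$, $b=e^{{\rm i}t}$ (so $c=e^{-{\rm i}(s+t)}$), the critical point equations become $\sin s+\sin(s+t)=0$ and $\sin t+\sin(s+t)=0$.

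Subtracting gives $\sin s=\sin t$, hence $t\equiv s$ or $t\equiv\pi-s\pmod{2\pi}$. If $t\equiv s$ the system collapses to $\sin s\,(1+2\cos s)=0$, with roots $s\in\{0,\pi,2\pi/3,4\pi/3\}$ and thus $(a,b,c)\in\{(1,1,1),(-1,-1,1),(w,w,w),(w^2,w^2,w^2)\}$; if $t\equiv\pi-s$ then $s+t\equiv\pi$, so $\sin(s+t)=0$ forces $\sin s=\sin t=0$ and $(a,b,c)\in\{(1,-1,-1),(-1,1,-1)\}$. Each of these six triples is then lifted back by solving $q_1^3=b/a$ (three choices, all in $\mathbb Z_6$ since $b/a=\pm1$ throughout) and $q_2=(aq_1)^{-1}$; reading off the resulting eighteen pairs $(q_1,q_2)$, each automatically in $\mathbb Z_6\times\mathbb Z_6$, reproduces exactly the list in (2).

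The computation is elementary; the two spots needing a little care are the covering-map step that legitimizes replacing $(q_1,q_2)$ by $(a,b)$ without creating or destroying critical points, and the final bookkeeping, where one verifies that the three cube roots of $b/a$ do produce the three listed pairs for each $(a,b,c)$. Neither is a genuine obstacle, but for $N=3$ there is no shortcut: one must genuinely enumerate all solutions of the trigonometric system together with all of their lifts, and then recognize the resulting set as the $18$-element subset of $\mathbb Z_6\times\mathbb Z_6$ in the statement.
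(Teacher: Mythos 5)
Your proof is correct and lands on exactly the paper's list, but it takes a genuinely different route at the key step. The paper first establishes a general criterion (Theorem 5.1): writing $\Phi=\Phi_0+\ldots+\Phi_{N-1}$ with the index $i$ kept fixed in each summand, the critical points are exactly those $q$ with $\Phi_i\in\mathbb R$ for all $i$. For $N=2$ this gives $(q_1/q_0)^2\in\mathbb R$ directly, and for $N=3$, after the same substitution $a=q_0^2/(q_1q_2)$, $b=q_1^2/(q_0q_2)$, $c=q_2^2/(q_0q_1)$, it gives the single clean condition ${\rm Im}(a)={\rm Im}(b)={\rm Im}(c)$, which combined with $abc=1$ is solved by a short algebraic case analysis (either $a=b=c$ with $a^3=1$, or up to permutation $(1,-1,-1)$). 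You instead bypass that criterion entirely and differentiate in phase coordinates, which forces you to (i) justify that fixing $q_0=1$ loses no critical points -- your transversality argument is correct in substance, though strictly speaking the gradient is orthogonal to the orbit direction $(1,\ldots,1)$ rather than tangent to the slice $\{\alpha_0=0\}$; what matters is that $d\Phi$ kills the orbit direction, so vanishing on the slice tangent implies vanishing everywhere -- and (ii) legitimize the passage to $(a,b)$ via the determinant-$3$ covering map, a point the paper glosses over. Your trigonometric system $\sin s+\sin(s+t)=\sin t+\sin(s+t)=0$ and its case analysis recover the same six triples $(a,b,c)$, and the lifting via $q_1^3=b/a$, $q_2=(aq_1)^{-1}$ is identical to the paper's. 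The paper's criterion has the advantage of scaling to general $N$ (it is the tool intended for the open $N=4$ case), whereas your direct computation is more self-contained and makes the multiplicity $3$ of the lift conceptually transparent.
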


\begin{proof}
We use the various formulae from the discussion at the end of section 3.

(1) We recall that at $N=2$ we have:
$$\Phi=6+\left(\frac{q_0}{q_1}\right)^2+\left(\frac{q_1}{q_0}\right)^2$$

The decomposition $\Phi=\Phi_0+\Phi_1$ is as follows:
\begin{eqnarray*}
\Phi_0&=&3+\left(\frac{q_0}{q_1}\right)^2\\
\Phi_1&=&3+\left(\frac{q_1}{q_0}\right)^2
\end{eqnarray*}

Thus the critical points appear at $q_1/q_0\in\{1,{\rm i},-1,-{\rm i}\}$, which gives the result.

(2) We recall from the end of section 3 that we have:
$$\Phi=15+4Re\left(\frac{q_0^3+q_1^3+q_2^3}{q_0q_1q_2}\right)$$

In terms of $a=q_0^2/(q_1q_2)$, $b=q_1^2/(q_0q_2)$, $c=q_2^2/(q_0q_1)$, we have:
$$\Phi=15+Re(a+b+c)$$

The decomposition $\Phi=\Phi_0+\Phi_1+\Phi_2$ is as follows:
\begin{eqnarray*}
\Phi_0&=&5+2a+\bar{b}+\bar{c}\\
\Phi_1&=&5+2b+\bar{a}+\bar{c}\\
\Phi_2&=&5+2c+\bar{a}+\bar{b}
\end{eqnarray*}

It follows that we have:
\begin{eqnarray*}
Im(\Phi_0)&=&2Im(a)-Im(b)-Im(c)\\
Im(\Phi_1)&=&2Im(b)-Im(a)-Im(c)\\
Im(\Phi_2)&=&2Im(c)-Im(a)-Im(b)
\end{eqnarray*}

Thus the critical point condition is simply:
$$Im(a)=Im(b)=Im(c)$$

Now recall that $a,b,c$ must satisfy $abc=1$. If these numbers are all equal, we must have $a=b=c\in\mathbb Z^3$. If not, two of them must be of the form $r,-\bar{r}$ and the third one must be $r$ or $-\bar{r}$, and from $abc=1$ we obtain that $r$ must be real, $r=\pm 1$. Thus the solutions in this latter case must be, up to permutations, of the form $(1,-1,-1)$.

Sumarizing, with $w=e^{2\pi i/3}$, the solutions are:
$$(a,b,c)\in\{(1,1,1),(w,w,w),(w^2,w^2,w^2),(1,-1,-1),(-1,1,-1),(-1,-1,1)\}$$

With our normalization $q_0=1$, the above conditions $a=q_0^2/(q_1q_2)$, $b=q_1^2/(q_0q_2)$, $c=q_2^2/(q_0q_1)$ definining $a,b,c$ become:
$$a=\frac{1}{q_1q_2},\quad b=\frac{q_1^2}{q_2},\quad c=\frac{q_2^2}{q_1}$$

We can of course neglect the third equation, the product being 1 anyway, and we get:
$$q_1^3=\frac{b}{a},\quad q_2=\frac{q_1^2}{b}$$

By plugging in the above 6 solutions for $(a,b,c)$, we obtain the result.
\end{proof}

It would be of course very interesting to work out now the case $N=4$. Here the decomposition $\Phi=\Phi_0+\Phi_1+\Phi_2+\Phi_3$ is as follows:
\begin{eqnarray*}
\Phi_0&=&7+2\left(\frac{q_0q_1}{q_2q_3}+\frac{q_0q_3}{q_1q_2}\right)+\frac{q_0^2}{q_2^2}+2\cdot\frac{q_0^2}{q_1q_3}+\left(\frac{q_0q_2}{q_1^2}+\frac{q_0q_2}{q_3^2}\right)\\
\Phi_1&=&7+2\left(\frac{q_0q_1}{q_2q_3}+\frac{q_1q_2}{q_0q_3}\right)+\frac{q_1^2}{q_3^2}+2\cdot\frac{q_1^2}{q_0q_2}+\left(\frac{q_1q_3}{q_0^2}+\frac{q_1q_3}{q_2^2}\right)\\
\Phi_2&=&7+2\left(\frac{q_2q_3}{q_0q_1}+\frac{q_1q_2}{q_0q_3}\right)+\frac{q_2^2}{q_0^2}+2\cdot\frac{q_2^2}{q_1q_3}+\left(\frac{q_0q_2}{q_1^2}+\frac{q_0q_2}{q_3^2}\right)\\
\Phi_3&=&7+2\left(\frac{q_2q_3}{q_0q_1}+\frac{q_0q_3}{q_1q_2}\right)+\frac{q_3^2}{q_1^2}+2\cdot\frac{q_3^2}{q_0q_2}+\left(\frac{q_1q_3}{q_0^2}+\frac{q_1q_3}{q_2^2}\right)
\end{eqnarray*}

The solution here seems to require substantially more work than at $N=2,3$.

The problems regarding the critical points can be summarized as follows:

\begin{problem}
What are the critical points at $N=4$? In general, do they have some interesting algebraic property? Can they be of help in proving Conjecture 4.5?
\end{problem}

Observe that we can't of course expect in general these critical points to belong to some nice cyclic group, because starting at $N=6$ we have lots of ``exotic'' cyclic $N$-roots.

In addition, we have the following conjecture, found by using a computer:

\begin{conjecture}
For a critical point the value of $\Phi_i$ depends only on the parity of $i$.
\end{conjecture}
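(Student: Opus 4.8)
\emph{Plan.} I would work with the discrete Fourier transform of the vector $(\Phi_0,\dots,\Phi_{N-1})$. The statement only makes sense for $N$ even (so that the parity of $i\in\mathbb Z_N$ is well defined), which I assume. For such $N$, the assertion ``$\Phi_i$ depends only on the parity of $i$'' says exactly that $\Phi_i$ takes one value $\alpha$ on even indices and one value $\beta$ on odd indices, which is in turn equivalent to $\widehat\Phi_m=0$ for all $m\neq 0,N/2$, where $\widehat\Phi_m:=\sum_{i}w^{im}\Phi_i$. So the first step is to compute $\widehat\Phi_m$ explicitly, by running the Fourier manipulation from the proof of Theorem 5.1 one step further: starting from the formula $\Phi_i=N^2q_i\sum_aw^{ai}|\xi_a|^2\bar\xi_a$ obtained there (with $\xi=Fq/\sqrt N$), and then summing $w^{im}\Phi_i$ over $i$, the sum $\sum_iw^{i(\cdot)}$ collapses to a Kronecker delta and one is left with
$$\widehat\Phi_m=N^3\sum_a|\xi_a|^2\,\bar\xi_a\,\xi_{a+m}.$$
Thus the conjecture is equivalent to the vanishing of these ``cubic correlations'' of $\xi$ at every frequency $m\neq 0,N/2$.

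\emph{Recasting criticality.} By Theorem 5.1(2) a critical point is precisely a point at which every $\Phi_i$ is real; taking Fourier transforms, this is the same as $\widehat\Phi_{-m}=\overline{\widehat\Phi_m}$ for all $m$, and in the real case $\bar q_i=q_{-i}$ one moreover has $\Phi_{-i}=\overline{\Phi_i}=\Phi_i$, so each $\widehat\Phi_m$ is then real and even in $m$. None of this makes $\widehat\Phi_m$ vanish: already at $N=4$, criticality only forces $\widehat\Phi_1\in\mathbb R$, whereas the conjecture demands $\widehat\Phi_1=0$. The natural extra ingredient is unitarity, which says that the ordinary quadratic correlations of $\xi$ are trivial, $\sum_a\bar\xi_a\xi_{a+m}=\delta_{m,0}$; writing $|\xi_a|^2=\tfrac1N+r_a$ with $\sum_ar_a=0$, the conjecture then reduces to showing that at a critical point $\sum_ar_a\bar\xi_a\xi_{a+m}=0$ for every $m\neq 0,N/2$.

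\emph{Where the difficulty lies.} The crux is exactly this last reduction: the critical equations are genuinely weaker than what is claimed, so one must exploit the precise quartic shape of $\Phi$ and not merely the vanishing of the $\mathrm{Im}(\Phi_i)$. I would push on two fronts. Computationally, for small even $N$ (say $N=4,6,8$) one can decide, by a Gröbner basis calculation, whether each $\Phi_i-\Phi_{i+2}$ lies in the ideal generated by $\{\mathrm{Im}(\Phi_k)\}_k$ together with $\{|q_k|^2-1\}_k$; a positive answer would not only settle those cases but, more usefully, exhibit the explicit algebraic identity responsible, which one could then try to read off in a form valid for all $N$. Conceptually, one should look for a symmetry of the critical locus itself that is not yet in hand: the twists $q_i\mapsto\lambda^iq_i$ ($\lambda\in\mathbb T$) fix each $\Phi_i$, the cyclic shifts permute the $\Phi_i$ cyclically, and in the real case the reflection $i\mapsto-i$ symmetrizes them, but what is missing is any mechanism collapsing the $N$ real numbers $\Phi_i$ to two values. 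Producing such a mechanism — whether as the algebraic identity suggested by the small-$N$ computations, or via a finer symmetry of the critical locus relating $q$ to its cyclic shifts — is, I expect, the main obstacle.
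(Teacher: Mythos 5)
The statement you were asked about is a \emph{conjecture}: the paper offers no proof of it, only the remark that it was ``found by using a computer,'' so there is no proof of the authors' to compare yours against, and your proposal --- explicitly a reformulation plus a research program rather than a proof --- is an honest response to that situation. Your Fourier-analytic recasting is correct: starting from the identity $\Phi_i=N^2q_i\sum_aw^{ai}|\xi_a|^2\bar\xi_a$ extracted from the proof of Theorem 5.1, and using $\sum_iw^{i(a+m)}q_i=N\xi_{a+m}$, one does get $\widehat\Phi_m=N^3\sum_a|\xi_a|^2\bar\xi_a\xi_{a+m}$, and for $N$ even the conjecture is indeed equivalent to $\widehat\Phi_m=0$ for all $m\neq 0,N/2$, hence (by unitarity, $\sum_a\bar\xi_a\xi_{a+m}=\delta_{m,0}$) to $\sum_ar_a\bar\xi_a\xi_{a+m}=0$ with $r_a=|\xi_a|^2-1/N$. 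This is sharper than anything stated in the paper, and your restriction to even $N$ is in fact forced: at $N=3$ the critical point $(a,b,c)=(1,-1,-1)$ of Proposition 5.2 gives $(\Phi_0,\Phi_1,\Phi_2)=(5,3,3)$, which does not depend only on the integer parity of $i$, so the conjecture must be read with $N$ even (presumably $N=4n$, the case of interest for the CHC).

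The genuine gap is the one you name yourself: the critical equations $\mathrm{Im}\,\Phi_i=0$ only give $\widehat\Phi_{-m}=\overline{\widehat\Phi_m}$ (and, in the real case $\bar q_i=q_{-i}$, that each $\widehat\Phi_m$ is real and even in $m$), which is strictly weaker than the vanishing you need --- already at $N=4$ criticality leaves $\widehat\Phi_1$ an arbitrary real number where the conjecture demands zero. No mechanism is supplied that would force $\sum_ar_a\bar\xi_a\xi_{a+m}=0$ at a critical point; the Gr\"obner-basis computations and the search for a hidden symmetry of the critical locus are plans, not arguments. So the conjecture remains open after your proposal, exactly as it is in the paper; what you have contributed is a correct and potentially useful equivalent form of it, not a proof, and you should present it as such.
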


Once again, we have here an interesting symmetry statement, to be added to the list of symmetry statements already presented above. Solving all these problems is of course very important for us, because the formula of $\Phi$ that we have so far is not very usable.

In several papers \cite{bc1}, \cite{bne}, \cite{bnz}, it has been noticed that among the different possible values of exponents, the Cauchy-Schwarz inequality for $p=1$ is the easiest to manipulate.

So, let us consider the following quantity, depending on $q\in\mathbb T^N$:
$$\Psi=\frac{1}{\sqrt{N}}||FQF^*||_1$$

With $\xi=Fq/\sqrt{N}$ and with $q_j=e^{{\rm i}\alpha_j}$, and $\theta_s=\arg(\xi_s)$, this function is:
$$\Psi=\sqrt{N}\sum_s|\xi_s|=\sum_s|\sum_jw^{sj}q_j|=\sum_{sj}w^{sj}e^{i(\alpha_j-\theta_s)}$$

The properties of $\Psi$ can be summarized as follows:

\begin{proposition}
Write $\Psi=\Psi_0+\ldots+\Psi_{N-1}$, with each $\Psi_k$ being given by the same formula as $\Psi$, namely $\Psi=\sum_{sk}w^{sk}e^{{\rm i}(\alpha_k-\theta_s)}$, but keeping the index $k$ fixed. Then:
\begin{enumerate}
\item We have $\Psi\in\mathbb R$, and $\Psi\leq N$, with equality in the Hadamard case.

\item The critical points of $\Psi$ are those where $\Psi_k\in\mathbb R$, for any $k$.

\item In the Hadamard case we have $\Psi_k=1$, for any $k$.
\end{enumerate}
\end{proposition}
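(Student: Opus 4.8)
The plan is to transcribe the three-part argument of Theorem 5.1 to the $1$-norm, with the $4$-norm lower bound $||U||_4\geq 1$ replaced by the $1$-norm upper bound $||U||_1\leq N\sqrt N$, the rest being handled by the same Fourier bookkeeping. Throughout set $U=FQF^*\in U_N$, write $q_j=e^{{\rm i}\alpha_j}$ and $g_s=\sum_jw^{sj}q_j$, so that the first-row entries of $U$ are $\xi_s=g_s/N$, that $\theta_s=\arg g_s=\arg\xi_s$, and that $\Psi=N^{-1/2}\sum_s|g_s|$, $\Psi_k=N^{-1/2}\sum_sw^{sk}e^{{\rm i}(\alpha_k-\theta_s)}$.

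For part (1): $\Psi$ is real, being a nonnegative multiple of $||U||_1=\sum_{ij}|U_{ij}|$. Cauchy--Schwarz over the $N^2$ entries gives $||U||_1\leq N\big(\sum_{ij}|U_{ij}|^2\big)^{1/2}=N||U||_2=N\sqrt N$, since $||U||_2^2=N$ for a unitary; dividing by $\sqrt N$ yields $\Psi\leq N$. Equality in Cauchy--Schwarz forces all $|U_{ij}|$ equal, hence $|U_{ij}|\equiv 1/\sqrt N$, which is exactly the condition that $H=\sqrt N U$ be a complex Hadamard matrix; conversely that condition gives $\Psi=N$ at once.

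For parts (2) and (3): from $q_j=e^{{\rm i}\alpha_j}$ we have $\partial g_s/\partial\alpha_k={\rm i}\,w^{sk}q_k$, and the identity $\partial|g|/\partial t=Re(\bar g\,g'/|g|)$, valid wherever $g\neq 0$, gives $\partial|g_s|/\partial\alpha_k=Re({\rm i}\,w^{sk}q_ke^{-{\rm i}\theta_s})=-Im(w^{sk}e^{{\rm i}(\alpha_k-\theta_s)})$. Summing over $s$ and pulling out $Im$, $\partial\Psi/\partial\alpha_k=-Im(\Psi_k)$, so the critical points of $\Psi$ are precisely those where $\Psi_k\in\mathbb R$ for every $k$, which is part (2). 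For part (3), in the Hadamard case $|\xi_s|\equiv 1/\sqrt N$, hence $|g_s|\equiv\sqrt N$ and $e^{-{\rm i}\theta_s}=\overline{g_s}/\sqrt N$; substituting and using Fourier orthogonality, $\sum_sw^{sk}\overline{g_s}=\sum_{s,j}w^{s(k-j)}\bar q_j=N\bar q_k$, so $\Psi_k=q_k\bar q_k=1$ for all $k$, consistently with $\Psi=\sum_k\Psi_k=N$.

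The one point of real care — absent in Theorem 5.1, where $\Phi$ is a trigonometric polynomial — is that $\Psi$ involves absolute values and is only piecewise smooth: it fails to be differentiable exactly where some $g_s$, equivalently some entry $\xi_s$ of $U$, vanishes, a locus on which $\theta_s$ is moreover undefined. The statement of part (2) should therefore be read on the open dense set where all $g_s\neq 0$; this set contains every Hadamard point (there $|g_s|\equiv\sqrt N$), which is all that parts (1) and (3) need, and off it one would replace the gradient by the subdifferential. Apart from this, and the routine bookkeeping of the normalizing powers of $N^{1/2}$, the argument is just the $p=1$ counterpart of the proof of Theorem 5.1.
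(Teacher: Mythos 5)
Your proof is correct and follows essentially the same route as the paper: Cauchy--Schwarz between the $1$- and $2$-norms for (1), differentiation in the angles $\alpha_k$ giving $\partial\Psi/\partial\alpha_k=-Im(\Psi_k)$ for (2), and the Fourier inversion $\sum_sw^{sk}\bar\xi_s\propto\bar q_k$ for (3); your direct use of $\partial|g|/\partial t=Re(\bar g g'/|g|)$ just packages the paper's observation that the $\partial\theta_s/\partial\alpha_k$ terms multiply $Im(e^{-{\rm i}\theta_s}\xi_s)=Im|\xi_s|=0$. Your added caveat that $\Psi$ is only differentiable on the open set where all $\xi_s\neq 0$ is a legitimate point of care that the paper glosses over.
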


\begin{proof}
This is quite similar to the proof of Theorem 5.1:

(1) For $U\in U_N$ the Cauchy-Schwarz inequality gives $||U||_1\leq N\sqrt{N}$, with equality if and only if $H=\sqrt{N}U$ is Hadamard, and this gives the result.

(2) By differentiating the formula of $\Psi$ with respect to $\alpha_k$, we obtain:
\begin{eqnarray*}
\frac{\partial\Psi}{\partial \alpha_k}
&=&Re\left(\sum_{sj}\left({\rm i}\frac{\partial \alpha_j}{\partial \alpha_k}-{\rm i} \frac{\partial \theta_s}{\partial \alpha_k}\right) w^{sj} e^{{\rm i}(\alpha_j- \theta_s)}  \right)\\
&=&-\sum_{sj}\left( \delta_{jk} -  \frac{\partial \theta_s}{\partial \alpha_k}\right)Im(w^{sj} e^{{\rm i}(\alpha_j-\theta_s)})\\
&=&-Im\left(\sum_{s}w^{sk} e^{{\rm i}(\alpha_k-\theta_s)}\right)+\sum_s\frac{\partial \theta_s}{\partial \alpha_k}Im(e^{-{\rm i}\theta_s}\xi_s)
\end{eqnarray*}

Now since the right term vanishes, this gives the result.

(3) In the Hadamard case we have $\xi_s=e^{{\rm i}\theta_s}$, and so:
$$\Psi_k=\sum_sw^{sk}e^{{\rm i}(\alpha_k-\theta_s)}=\sum_sw^{sk}q_k\bar{\xi}_s=q_k\bar{q}_k=1$$

This ends the proof.
\end{proof}

Observe that our critical point criterion is similar as well to the condition ``$S^tU$ is symmetric'' found in \cite{bc1}, with the sign matrix $S$ replaced now by the angle vector $\theta$.

As pointed out in \cite{bne}, one interesting problem, at least in the context of the orthogonal group problems considered there, is to find the joint critical points of all $p$-norms, in the whole exponent range $p\in [1,\infty)$. Indeed, as observed there, these joint critical points seem to enjoy interesting combinatorial properties. So, we have:

\begin{problem}
Is there any simple characterization of the joint critical points of all the maps of type $\Phi_p=||FQF^*||_p$?
\end{problem}

As a first remark, the case of exponents of type $p=2r$ with $r\in\mathbb N$ is quite similar to the case $p=4$ discussed above. Indeed, the key H\"older inequality in \cite{bne}, which allows to detect the eigenvalue vectors of the rescaled complex Hadamard matrices, is simply:  
$$\sum_{\Sigma i=\Sigma j}\frac{q_{i_1}\ldots q_{i_r}}{q_{j_1}\ldots q_{j_r}}\geq N^r$$

So, in this case an analogue of Theorem 5.1 is probably available. However, for arbitrary exponents $p\geq 1$ some angles like those in Proposition 5.5 above should definitely come into play, and it is not clear what the answer to the above problem should be.

\section{The moment method}

Consider a bounded function $\Theta:M\to[0,\infty)$, where $M$ is a compact manifold endowed with a probability measure. We have then the following well-known formula:
$$\max\Theta=\lim_{p\to\infty}\left(\int_M\Theta(x)^p\,dx\right)^{1/p}$$

In addition, more specialized quantities such as the exact number of maxima of $\Theta$ can be recovered via variations of this formula. So, in view of this observation, our various counting problems can be investigated by using this method:

\begin{proposition}
We have the formula
$$\min\Phi=N^3-\lim_{p\to\infty}\left(\int_{\mathbb T^N}(N^3-\Phi)^p\,dq\right)^{1/p}$$
where the torus $\mathbb T^N$ is endowed with its usual probability measure.
\end{proposition}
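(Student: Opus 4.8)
The plan is to apply, to the function $\Theta=N^3-\Phi$ on $M=\mathbb{T}^N$, the standard $L^p$-to-$L^\infty$ principle recalled just above. First I would check that $\Theta$ is a legitimate input for that principle, namely that it is continuous and takes values in $[0,\infty)$. Continuity is clear, since by Definition 3.4 the quantity $\Phi$ is a trigonometric polynomial in the arguments $\alpha_0,\dots,\alpha_{N-1}$ of the entries of $q$. For nonnegativity, recall that $\Phi$ is by definition a sum of $N^3$ terms, each of modulus $1$ because $|q_i|=1$; hence $\Phi\le N^3$, so that $\Theta=N^3-\Phi\ge 0$. Combined with Theorem 3.5, which gives $\Phi\ge N^2$, we even get the two-sided bound $0\le\Theta\le N^3-N^2$, so $\Theta$ is bounded.

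Next I would justify the displayed formula $\max\Theta=\lim_{p\to\infty}\left(\int_M\Theta(x)^p\,dx\right)^{1/p}$ in the present setting, the point being that $dq$ is a \emph{probability} measure of full support on the compact torus. The inequality $\left(\int_{\mathbb{T}^N}\Theta^p\,dq\right)^{1/p}\le\max\Theta$ is immediate. For the reverse, fix $\varepsilon>0$: since $\Theta$ is continuous and $\mathbb{T}^N$ is compact, the value $M_\Theta:=\max\Theta$ is attained, and the open set $\{\Theta>M_\Theta-\varepsilon\}$ is nonempty, hence has strictly positive Haar measure $c=c(\varepsilon)>0$. Therefore
$$\left(\int_{\mathbb{T}^N}\Theta^p\,dq\right)^{1/p}\ge (M_\Theta-\varepsilon)\,c^{1/p}\longrightarrow M_\Theta-\varepsilon \quad (p\to\infty),$$
so that $\liminf_{p\to\infty}\left(\int_{\mathbb{T}^N}\Theta^p\,dq\right)^{1/p}\ge M_\Theta-\varepsilon$ for every $\varepsilon>0$; letting $\varepsilon\to 0$ and combining with the upper bound gives $\lim_{p\to\infty}\left(\int_{\mathbb{T}^N}\Theta^p\,dq\right)^{1/p}=\max\Theta$.

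Finally I would translate back. Since $\Phi$ is continuous on the compact torus $\mathbb{T}^N$, its minimum is attained, and $\max\Theta=\max(N^3-\Phi)=N^3-\min\Phi$. Substituting this into the limit formula of the previous paragraph and rearranging yields
$$\min\Phi=N^3-\lim_{p\to\infty}\left(\int_{\mathbb{T}^N}(N^3-\Phi)^p\,dq\right)^{1/p},$$
as claimed. I do not expect a genuine obstacle here: the only two points that need a moment of care are the nonnegativity of $\Theta$, which uses nothing beyond the triangle inequality and $|q_i|=1$, and the positive-measure property of the superlevel sets $\{\Theta>M_\Theta-\varepsilon\}$, which is exactly where the full support of the Haar probability measure on $\mathbb{T}^N$ enters; everything else is the routine $L^p\uparrow L^\infty$ argument.
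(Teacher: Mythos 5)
Your proposal is correct and follows exactly the paper's route: the paper also applies the standard $L^p\to L^\infty$ limit formula to $\Theta=N^3-\Phi$, justifying $\Theta\ge 0$ by the same observation that $\Phi$ is a sum of $N^3$ terms of modulus $1$. The only difference is that you prove the limit formula (via positive measure of superlevel sets) whereas the paper cites it as well known; this is a matter of detail, not of approach.
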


\begin{proof}
This follows from the above formula, with $\Theta=N^3-\Phi$. Observe that $\Theta$ is indeed positive, because $\Phi$ is by definition a sum of $N^3$ complex numbers of modulus 1. 
\end{proof}

The moment approach to the Hadamard matrix problematics goes in fact back to \cite{bc1}. In the present circulant case, the moment approach looks like a quite reasonable method. Indeed, integrating on the torus is a very simple operation: we just have to make reduced words out of the generators $q_i,\bar{q}_i$, and then the reduced words 1 contribute to the integral, and all the other words, containing $q$ variables, have integral 0.

More precisely, let us restrict attention to the problem of computing the moments of $\Phi$, which is more or less the same as computing those of $N^3-\Phi$. We have then:

\begin{proposition}
The moments of $\Phi$ are given by
$$\int_{\mathbb T^N}\Phi^p\,dq=\#\left\{ \begin{pmatrix}i_1k_1\ldots i_pk_p\\ j_1l_1\ldots j_pl_p\end{pmatrix}\Big|i_s+k_s=j_s+l_s,[i_1k_1\ldots i_pk_p]=[j_1l_1\ldots j_pl_p]\right\}$$
where the sets between brackets are by definition sets with repetition. 
\end{proposition}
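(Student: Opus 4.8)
The plan is to expand $\Phi^p$ directly from the definition and integrate term by term over the torus, using the orthogonality relations for characters. Recall that $\Phi=\sum_{i+k=j+l}\frac{q_iq_k}{q_jq_l}$, so raising to the $p$-th power and expanding gives
\[
\Phi^p=\sum\frac{q_{i_1}q_{k_1}\cdots q_{i_p}q_{k_p}}{q_{j_1}q_{l_1}\cdots q_{j_p}q_{l_p}},
\]
where the sum is over all tuples $(i_1,k_1,\ldots,i_p,k_p)$ and $(j_1,l_1,\ldots,j_p,l_p)$ in $\{0,1,\ldots,N-1\}^{2p}$ subject to the $p$ arithmetic constraints $i_s+k_s=j_s+l_s$ (all mod $N$) coming from the individual factors of $\Phi$.

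The key step is the integration. Each monomial in the expansion is a product $\prod_{m} q_m^{e_m}$ over the $N$ torus variables $q_0,\ldots,q_{N-1}$, where the exponent $e_m$ is the number of times $m$ occurs among the ``numerator'' indices $i_1,k_1,\ldots,i_p,k_p$ minus the number of times it occurs among the ``denominator'' indices $j_1,l_1,\ldots,j_p,l_p$. Integrating over $\mathbb T^N$ with the product Haar measure, we have $\int_{\mathbb T}q_m^{e}\,dq_m=\delta_{e,0}$, so the monomial contributes $1$ if every $e_m=0$ and $0$ otherwise. The condition ``$e_m=0$ for all $m$'' says precisely that each index value appears equally often in the numerator list and the denominator list, i.e. that the multisets $[i_1k_1\ldots i_pk_p]$ and $[j_1l_1\ldots j_pl_p]$ (sets with repetition) coincide. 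Summing the surviving $1$'s gives exactly the cardinality of the set described in the statement.

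The only thing to be careful about is the double role played by the arithmetic constraints versus the multiset constraint: the constraints $i_s+k_s=j_s+l_s$ are imposed pair-by-pair (one per factor of $\Phi$) and are part of the indexing set of the expansion, whereas the equality of multisets is a global condition that emerges only after integration. Both are recorded in the displayed counting formula, which is why the two appear side by side. I would also note explicitly, as the paper already does for $\Theta=N^3-\Phi$, that this computation makes sense because $\Phi$ is genuinely a finite sum of $N^3$ unit-modulus terms, so the expansion of $\Phi^p$ is a finite sum of monomials and interchanging sum and integral is trivially justified.

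I do not expect any real obstacle here: the statement is essentially a bookkeeping identity, and the ``hard part'' is merely making the translation between the analytic vanishing condition $\int q_m^{e_m}=\delta_{e_m,0}$ and the combinatorial multiset equality transparent enough that no reader doubts it. One minor point worth stating cleanly is that the arithmetic conditions are taken modulo $N$ while the index-matching (multiset) condition is on the actual labels in $\{0,\ldots,N-1\}$, so there is no ambiguity in the count.
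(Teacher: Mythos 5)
Your argument is correct and is precisely the one the paper intends: expand $\Phi^p$, integrate monomial by monomial using $\int_{\mathbb T}q_m^{e}\,dq_m=\delta_{e,0}$, and observe that a term survives exactly when the numerator and denominator index multisets coincide, with the per-factor conditions $i_s+k_s=j_s+l_s$ already built into the indexing set. The paper dismisses this as ``clear from the above discussion'' (its remark about reduced words in the $q_i,\bar q_i$ integrating to $0$ unless trivial), so you have simply written out the same bookkeeping in full.
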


\begin{proof}
This is indeed clear from the above discussison.
\end{proof}

Regarding now the real case, an analogue of Proposition 6.2 holds of course, but the combinatorics doesn't get any simpler. So, we are led into the following question:

\begin{problem}
How to compute the moments of $\Phi$, or rather of $N^3-\Phi$, in the real and in the complex case?
\end{problem}

One idea in dealing with this problem is by considering the ``enveloping sum'', obtained from $\Phi$ by dropping the condition $i+k=j+l$:

\begin{definition}
The enveloping sum of $\Phi$ is the function
$$\tilde{\Phi}=\sum_{ijkl}\frac{q_iq_k}{q_jq_l}$$
with the sum over all possible indices $i,j,k,l$.
\end{definition}

The point is that the moments of $\Phi$ appear as ``sub-quantities'' of the moments of $\tilde{\Phi}$, so perhaps the question to start with is to understand very well the moments of $\tilde{\Phi}$.

And this latter problem sounds like a quite familiar one, because we have:
$$\tilde{\Phi}=|\sum_iq_i|^4$$

We will be back to this a bit later. For the moment, let us do some combinatorics:

\begin{proposition}
We have the moment formula
$$\int_{\mathbb T^N}\tilde{\Phi}^p\,dq=\sum_{\pi\in P(2p)}\binom{2p}{\pi}\frac{N!}{(N-|\pi|)!}$$
where $\binom{2p}{\pi}=\binom{2p}{b_1,\ldots,b_{|\pi|}}$, with $b_1,\ldots,b_{|\pi|}$ being the lengths of the blocks of $\pi$.
\end{proposition}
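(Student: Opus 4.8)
\medskip

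\noindent\textit{Proof proposal.} The plan is to exploit the identity $\tilde\Phi=|\sum_iq_i|^4$ recorded just above, and then to expand and integrate monomial by monomial over the torus. Put $S=\sum_{i=0}^{N-1}q_i$; since $|q_i|=1$ we have $1/q_i=\bar q_i$, so $\tilde\Phi=S^2\bar S^2$ and hence
$$\tilde\Phi^p=S^{2p}\bar S^{2p}=\sum_{f,g}\ \prod_{s=1}^{2p}q_{f(s)}\bar q_{g(s)},$$
the sum running over all pairs of functions $f,g:\{1,\dots,2p\}\to\{0,\dots,N-1\}$.

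First I would integrate term by term. The coordinates $q_a$ are independent and Haar-distributed on $\mathbb T$, and $\int_{\mathbb T}q_a^m\,dq_a=\delta_{m,0}$, so the integral of $\prod_sq_{f(s)}\bar q_{g(s)}$ equals $1$ exactly when $|f^{-1}(a)|=|g^{-1}(a)|$ for every $a$ --- that is, when $f$ and $g$ induce the same multiset of values on $\{1,\dots,2p\}$ --- and $0$ otherwise. Thus $\int_{\mathbb T^N}\tilde\Phi^p\,dq$ is the number of pairs $(f,g)$ whose value-multisets coincide.

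Next I would organise this count by the kernel partition. For $f$ let $\ker f\in P(2p)$ be the partition with $s\sim t$ iff $f(s)=f(t)$; if $\pi=\ker f$ has blocks of sizes $b_1,\dots,b_{|\pi|}$, then the value-multiset of $f$ has exactly $|\pi|$ distinct values, with multiplicities $b_1,\dots,b_{|\pi|}$. With $f$ fixed, a function $g$ has that same value-multiset precisely when its fibres form an ordered set partition of $\{1,\dots,2p\}$ into blocks of those prescribed sizes, the $\nu$-th fibre carrying the $\nu$-th value of $f$; this correspondence is a bijection, so the number of such $g$ is $\binom{2p}{b_1,\dots,b_{|\pi|}}=\binom{2p}{\pi}$, a quantity depending on $f$ only through $\pi$. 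Finally, for fixed $\pi\in P(2p)$ with $r=|\pi|$ blocks, the functions $f$ with $\ker f=\pi$ are exactly those constant on each block and injective on the set of blocks, hence correspond to injections of the $r$ blocks into $\{0,\dots,N-1\}$; there are $N(N-1)\cdots(N-r+1)=N!/(N-|\pi|)!$ of them, this falling factorial being $0$ when $|\pi|>N$. Summing $\binom{2p}{\pi}$ over all such $f$, and then over $\pi$, gives
$$\int_{\mathbb T^N}\tilde\Phi^p\,dq=\sum_{\pi\in P(2p)}\binom{2p}{\pi}\,\frac{N!}{(N-|\pi|)!},$$
which is the asserted formula.

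The only step calling for a little care is the count of the matching $g$'s: since several blocks of $\pi$ may have equal size, one should check that passing to ordered set partitions with a labelling of the parts really is a bijection onto the set of $g$'s with the prescribed value-multiset --- it is, because $g$ is recovered from its ordered system of fibres. Everything else is a routine expand-and-integrate computation, so I do not anticipate any genuine obstacle here.
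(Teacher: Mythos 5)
Your proposal is correct and follows essentially the same route as the paper: expand $\tilde\Phi^p=|\sum_iq_i|^{4p/2}$ into monomials, observe that a monomial integrates to $1$ precisely when the upper and lower index multisets coincide, and then count by the kernel partition $\pi$, getting $N!/(N-|\pi|)!$ choices for the injective assignment of values to blocks and $\binom{2p}{\pi}$ for the matching second index sequence. Your write-up just makes explicit the counting that the paper compresses into one sentence.
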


\begin{proof}
Indeed, by using the same method as for $\Phi$, we obtain:
$$\int_{\mathbb T^N}\tilde{\Phi}(q)^p\,dq=\#\left\{ \begin{pmatrix}i_1k_1\ldots i_pk_p\\ j_1l_1\ldots j_pl_p\end{pmatrix}\Big|[i_1k_1\ldots i_pk_p]=[j_1l_1\ldots j_pl_p]\right\}$$

The sets with repetitions on the right are best counted by introducing the corresponding partitions $\pi=\ker\begin{pmatrix}i_1k_1\ldots i_pk_p\end{pmatrix}$, and this gives the formula in the statement.
\end{proof}

In order to discuss now the real case, we have to slightly generalize the above result, by computing all the half-moments of $\widetilde{\Phi}$. The result here is best formulated as:

\begin{proposition}
We have the moment formula
$$\int_{\mathbb T^N}|\sum q_i|^{2p}\,dq=\sum_kC_{pk}\frac{N!}{(N-k)!}$$
where $C_{pk}=\sum_{\pi\in P(p),|\pi|=k}\binom{p}{b_1,\ldots,b_{|\pi|}}$, with $b_1,\ldots,b_{|\pi|}$ being the lengths of the blocks of $\pi$.
\end{proposition}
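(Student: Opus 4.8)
The plan is to expand $|\sum_i q_i|^{2p}$ directly and integrate term by term over the torus, exactly as in the proof of Proposition~6.6, but now keeping track of one multi-index string instead of two paired ones. First I would write
$$\left|\sum_i q_i\right|^{2p}=\left(\sum_i q_i\right)^p\overline{\left(\sum_j q_j\right)^p}=\sum_{i_1,\ldots,i_p}\sum_{j_1,\ldots,j_p}\frac{q_{i_1}\cdots q_{i_p}}{q_{j_1}\cdots q_{j_p}},$$
so that integrating over $\mathbb T^N$ with the Haar measure kills every term except those in which the multiset $[i_1,\ldots,i_p]$ equals the multiset $[j_1,\ldots,j_p]$ (since $\int_{\mathbb T}q_a^m\bar q_a^n\,dq_a=\delta_{mn}$ independently for each coordinate $a$). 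Hence
$$\int_{\mathbb T^N}\left|\sum q_i\right|^{2p}dq=\#\left\{\begin{pmatrix}i_1\ldots i_p\\ j_1\ldots j_p\end{pmatrix}\ \Big|\ [i_1\ldots i_p]=[j_1\ldots j_p]\right\}.$$

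Next I would organize this count by the kernel partition $\pi=\ker(i_1,\ldots,i_p)\in P(p)$, that is, by which positions carry equal indices. Fix $\pi$ with $k=|\pi|$ blocks of sizes $b_1,\ldots,b_{|\pi|}$. Choosing the actual values amounts to an injection from the block set into $\{0,1,\ldots,N-1\}$, giving $N(N-1)\cdots(N-k+1)=N!/(N-k)!$ choices; note this correctly yields $0$ when $k>N$. Given such a value assignment for the $i$-string, the $j$-string must realize the same multiset of values with the same multiplicities, so $(j_1,\ldots,j_p)$ is obtained by placing, for each block of $\pi$ of size $b_r$, exactly $b_r$ copies of the corresponding value among the $p$ positions; the number of such arrangements is the multinomial coefficient $\binom{p}{b_1,\ldots,b_{|\pi|}}$. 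Summing over all $\pi$ with $|\pi|=k$ gives the coefficient $C_{pk}=\sum_{\pi\in P(p),\,|\pi|=k}\binom{p}{b_1,\ldots,b_{|\pi|}}$, and then summing over $k$ yields the claimed formula $\sum_k C_{pk}\,\frac{N!}{(N-k)!}$.

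There is no serious obstacle here; the only point requiring a little care is the bookkeeping in the previous paragraph — namely checking that, once the $i$-string's values are fixed by an injection on the blocks of $\pi$, the number of compatible $j$-strings depends only on the block sizes (not on $N$ or on the chosen values) and equals $\binom{p}{b_1,\ldots,b_{|\pi|}}$. This is because the condition is purely that the two strings induce the same function from $\{0,\ldots,N-1\}$ to multiplicities, and the $j$-string is free to permute these required multiplicities across its $p$ slots. One should also remark that this proposition genuinely generalizes Proposition~6.6: taking $p$ even, say $p=2p'$, and grouping the $2p'$ slots into $p'$ pairs $(i_s,k_s)$ recovers $\int_{\mathbb T^N}\tilde\Phi^{p'}\,dq$, since $\tilde\Phi=|\sum_i q_i|^4$. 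I would close by noting that the half-moments (odd $p$) are exactly what is needed for the real case, where the parameter vector lives in the subtorus $\bar q_i=q_{-i}$.
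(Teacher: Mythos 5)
Your argument is correct and is exactly the paper's intended proof: the paper simply says the result "follows exactly as Proposition 6.5, by replacing the exponent $p$ by $p/2$ and splitting the sum," and you have spelled out precisely that computation — expand, integrate to reduce to counting pairs of index strings with equal multisets, stratify by the kernel partition $\pi$, and count $N!/(N-|\pi|)!$ injections times the multinomial $\binom{p}{b_1,\ldots,b_{|\pi|}}$. Nothing is missing; your closing remarks on recovering Proposition 6.5 and on the relevance to the real case are consistent with the surrounding discussion in the paper.
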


\begin{proof}
This follows indeed exactly as Proposition 6.5 above, by replacing the exponent $p$ by the exponent $p/2$, and by splitting the resulting sum as in the statement.
\end{proof}

Observe that the above formula basically gives the moments of $\tilde{\Phi}$, in the real case. Indeed,  let us restrict attention to the case $N=2m$, which is the one we are interested in. Then, as explained in section 4 above, for the purposes of our minimization problem we can assume that our vector is of the following form:
$$q=(1,q_1,\ldots,q_{m-1},1,\bar{q}_{m-1},\ldots,\bar{q}_1)$$

So, we are led to the following conclusion, relating the real and complex cases:

\begin{proposition}
Consider the variable $X=q_1+\ldots+q_{m-1}$ over the torus $\mathbb T^{m-1}$.
\begin{enumerate}
\item For the complex problem at $N=m-1$, we have $\widetilde{\Phi}=|X|^4$ 

\item For the real problem at $N=2m$, we have $\widetilde{\Phi}=|2+X+\bar{X}|^4$.
\end{enumerate}
\end{proposition}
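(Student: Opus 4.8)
The plan is to obtain both identities directly from the factorization of the enveloping sum already recorded above, namely $\widetilde\Phi=|\sum_iq_i|^4$ on the torus. First I would make that factorization explicit: since $|q_j|=1$ we have $\bar q_j=q_j^{-1}$, so every term in Definition 6.4 equals $q_iq_k\bar q_j\bar q_l$, and the four sums separate:
$$\widetilde\Phi=\sum_{ijkl}q_iq_k\bar q_j\bar q_l=\Big(\sum_iq_i\Big)\Big(\sum_kq_k\Big)\overline{\Big(\sum_jq_j\Big)}\ \overline{\Big(\sum_lq_l\Big)}=\Big|\sum_iq_i\Big|^4 .$$
After this, the whole computation reduces to evaluating $S:=\sum_iq_i$ for the parameter vector attached to each of the two problems.

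For part (1), the complex problem at $N=m-1$ has, by Proposition 3.1, parameter torus $\mathbb T^{m-1}$ with $m-1$ unconstrained coordinates; naming those coordinates $q_1,\dots,q_{m-1}$ one gets $S=q_1+\dots+q_{m-1}=X$, hence $\widetilde\Phi=|X|^4$. This step is nothing more than a relabelling of the coordinates, chosen so as to match the notation used in part (2).

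For part (2), I would substitute into $\widetilde\Phi=|S|^4$ the normal form for the real problem explained in Section 4 and recalled in the discussion immediately preceding this statement, namely that for the minimization problem at $N=2m$ the parameter vector may be taken to be
$$q=(1,q_1,\dots,q_{m-1},1,\bar q_{m-1},\dots,\bar q_1).$$
Summing its $2m$ entries gives $S=1+1+\sum_{k=1}^{m-1}(q_k+\bar q_k)=2+X+\bar X$, and therefore $\widetilde\Phi=|2+X+\bar X|^4$. The only point to keep in mind is that this is exactly the family singled out in Section 4 (its use rests on $\bar q_0=q_0$ and $\bar q_m=q_m$ both lying in $\{\pm1\}$, together with the symmetries of Proposition 4.1, which for $\widetilde\Phi$ reduce to the obvious invariance $\widetilde\Phi(-q)=\widetilde\Phi(q)$).

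There is essentially no obstacle here: the substantive input, the identity $\widetilde\Phi=|\sum_iq_i|^4$, is already in hand, and everything else is bookkeeping. If anything, the care required is conceptual rather than computational — making sure in part (1) that ``$N=m-1$'' genuinely contributes $m-1$ free torus variables, and in part (2) that one is indeed working over the Section 4 normal-form family, so that the two occurrences of the same random variable $X$ (a sum of $m-1$ independent uniform circle variables) line up and make the announced relation between the real and complex cases precise.
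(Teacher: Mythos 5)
Your proof is correct and follows exactly the route the paper intends: the identity $\widetilde{\Phi}=\left|\sum_i q_i\right|^4$ (already recorded just before Proposition 6.5) plus a direct evaluation of the sum $\sum_i q_i$ on the relevant parameter family in each case. The paper's own proof is a one-line "clear from the definition," and your write-up is simply the honest expansion of that remark.
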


\begin{proof}
This is indeed clear from the definition of the enveloping sum $\widetilde{\Phi}$.
\end{proof}

In general, it is not clear to us what the formula of the numbers $C_{pk}$ in Proposition 6.6 is. So, as a conclusion here, let us just raise the following question:

\begin{problem}
What are the moments of the enveloping sum, $\tilde{\Phi}=|\sum_iq_i|^4$? And, once these moments are known, how to ``detect inside'' the moments of $\Phi$?
\end{problem}

Here is a random walk formulation of the problem, which might be useful:

\begin{proposition}
The moments of $\Phi$ have the following interpretation:
\begin{enumerate}
\item First, the moments of the enveloping sum $\int\widetilde{\Phi}^p$ count the loops of length $4p$ on the standard lattice $\mathbb Z^N\subset\mathbb R^N$, based at the origin.

\item $\int\Phi^p$ counts those loops which are ``piecewise balanced'', in the sense that each of the $p$ consecutive $4$-paths forming the loop satisfy $i+k=j+l$ modulo $N$.
\end{enumerate}
\end{proposition}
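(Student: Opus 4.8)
The plan is to combine the two moment formulas already in hand (Propositions 6.2 and 6.5) with a direct dictionary between monomials on the torus and walks on $\mathbb{Z}^N$. First I would recall the only analytic ingredient, the orthogonality relation used in those propositions: for arbitrary indices, $\int_{\mathbb{T}^N}q_{a_1}\cdots q_{a_r}\bar q_{b_1}\cdots\bar q_{b_s}\,dq$ equals $1$ when $r=s$ and the two sets with repetition coincide, $[a_1\ldots a_r]=[b_1\ldots b_r]$, and equals $0$ otherwise. Nothing new is proved here; the point is simply to read this combinatorial rule as a statement about paths.

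Now comes the dictionary. To each index $i\in\{0,\ldots,N-1\}$ I associate the basis vector $e_i\in\mathbb{Z}^N$, to a ``numerator'' variable $q_i$ the forward step $+e_i$, and to a ``denominator'' variable $\bar q_j$ the backward step $-e_j$. Expanding $\widetilde\Phi^p=\big(\sum_iq_i\big)^{2p}\big(\sum_j\bar q_j\big)^{2p}$, a monomial is recorded by a tuple $(i_1,k_1,j_1,l_1,\ldots,i_p,k_p,j_p,l_p)$, which I read as the $4p$-step walk started at the origin with steps $+e_{i_1},+e_{k_1},-e_{j_1},-e_{l_1},\ldots,+e_{i_p},+e_{k_p},-e_{j_p},-e_{l_p}$, that is, as a concatenation of $p$ consecutive $4$-paths of shape $(+,+,-,-)$, one per factor of $\widetilde\Phi$. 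Such a walk is a loop (returns to the origin) exactly when $\sum_s(e_{i_s}+e_{k_s})=\sum_s(e_{j_s}+e_{l_s})$ in $\mathbb{Z}^N$; reading this coordinate by coordinate it is precisely the multiset equality $[i_1k_1\ldots i_pk_p]=[j_1l_1\ldots j_pl_p]$, which by the orthogonality rule is also exactly the condition for the monomial to contribute (and then it contributes $1$). Hence $\int_{\mathbb{T}^N}\widetilde\Phi^p\,dq$ is the number of these loops, giving (1).

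For (2) I would only observe that $\Phi$ is obtained from $\widetilde\Phi$ by restricting each of the $p$ internal summations to indices with $i_s+k_s\equiv j_s+l_s\pmod N$. The identical expansion now produces tuples subject to that extra constraint for every $s$, and under the dictionary this is precisely the requirement that each of the $p$ consecutive $4$-paths forming the loop be balanced modulo $N$. The criterion for a monomial to count is unchanged, so $\int_{\mathbb{T}^N}\Phi^p\,dq$ enumerates exactly the piecewise balanced loops among those of (1) — which is the concrete meaning of the earlier remark that the moments of $\Phi$ sit as sub-quantities inside the moments of $\widetilde\Phi$.

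The argument is essentially bookkeeping, so I do not expect a genuine obstacle; the one place that needs care is pinning down what ``loop of length $4p$'' means, namely a closed $4p$-step walk whose steps are grouped into $p$ consecutive blocks of shape $(+,+,-,-)$, one block per factor. It is the loops of this prescribed combinatorial type that are counted, and I would state this explicitly to avoid conflating the count with that of all closed $4p$-step walks (which differs by a factor $\binom{4p}{2p}$ recording where the forward steps are placed). Relating $\int\widetilde\Phi^p$ to the classical return counts of the Pólya walk would then be a separate, optional remark in the spirit of the introduction.
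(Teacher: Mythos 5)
Your proposal is correct and follows exactly the paper's route: assertion (1) is read off from the moment formula in the proof of Proposition 6.5 and assertion (2) from Proposition 6.2, with the multiset equality interpreted as the return-to-origin condition for the $\pm e_i$ walk. Your extra care in specifying that the loops carry a prescribed $(+,+,-,-)$ block structure is a sensible clarification of what the paper leaves implicit, but it does not change the argument.
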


\begin{proof}
The first assertion follows from the formula in the proof of Proposition 6.5, and the second assertion follows from the formula in Proposition 6.2. 
\end{proof}

This statement looks quite encouraging, but passing from (1) to (2) is quite a delicate task, because in order to interpret the condition $i+k=j+l$ we have to label the coordinate axes of $\mathbb R^N$ by elements of the cyclic group $\mathbb Z_N$, and this is a quite unfamiliar operation. In addition, in the real case the combinatorics becomes more complex due to the symmetries of the parameter space, and we have no concrete results here so far.

\section{Concluding remarks}

We have seen in this paper that the Circulant Hadamard Conjecture (CHC) leads to the study of $\Phi=\sum_{i+k=j+l}\frac{q_iq_k}{q_jq_l}$ over the space $\{q\in\mathbb T^N|\bar{q}_i=q_{-i}\}$. More precisely, we have $\Phi\geq N^2$, and the CHC is equivalent to $\Phi>N^2$, for any $N>4$.

Our various results, and their further developments, are as follows:

(1) First, the study of $\Phi$ would greatly benefit from a better algebraic understanding of the circulant Hadamard matrices. Some results in this direction were obtained in sections 1-2 above. For a continuation of this work, we refer to our recent paper \cite{bh+}.

(2) A second series of algebraic problems concerns the critical points of $\Phi$, cf. sections 4-5 above. The study of these critical points falls into the ``almost Hadamard matrix'' framework from \cite{bc1}, \cite{bne}, \cite{bnz}. For some recent advances here, see \cite{bns}.

(3) Finally, the minimization problem for $\Phi$ appears to be a quite difficult task, needing substantial advances on (1,2) above. However, we have obtained some results here, via direct methods, in sections 4 and 6 above. For a continuation here, we refer to \cite{ban}.

\end{document}